% ------------------------------------------------------------------------
% bjourdoc.tex for birkjour.cls*******************************************
% ------------------------------------------------------------------------
%%%%%%%%%%%%%%%%%%%%%%%%%%%%%%%%%%%%%%%%%%%%%%%%%%%%%%%%%%%%%%%%%%%%%%%%%%

\documentclass{birkjour}
%
%
% THEOREM Environments (Examples)-----------------------------------------
%
 \newtheorem{thm}{Theorem}[section]
 \newtheorem{cor}[thm]{Corollary}
 \newtheorem{lem}[thm]{Lemma}
 
 \theoremstyle{definition}
 \newtheorem{defn}[thm]{Definition}
 \theoremstyle{remark}

 \numberwithin{equation}{section}

\begin{document}

%-------------------------------------------------------------------------
% editorial commands: to be inserted by the editorial office
%
%\firstpage{1} \volume{228} \Copyrightyear{2004} \DOI{003-0001}
%
%
%\seriesextra{Just an add-on}
%\seriesextraline{This is the Concrete Title of this Book\br H.E. R and S.T.C. W, Eds.}
%
% for journals:
%
%\firstpage{1}
%\issuenumber{1}
%\Volumeandyear{1 (2004)}
%\Copyrightyear{2004}
%\DOI{003-xxxx-y}
%\Signet
%\commby{inhouse}
%\submitted{March 14, 2003}
%\received{March 16, 2000}
%\revised{June 1, 2000}
%\accepted{July 22, 2000}
%
%
%
%---------------------------------------------------------------------------
%Insert here the title, affiliations and abstract:
%

\title[The Fermi-Walker Derivative in Galilean Space]
 {The Fermi-Walker Derivative in Galilean Space}

%----------Author 1
\author{Tevfik \c{S}AH\.{I}N}
\address{%
	Department of Mathematics,\\
	Faculty of Arts and Sciences,\\
	Amasya University,\\
	05000 Amasya\\
	Turkey}
\email{tevfik.sahin@amasya.edu.tr, tevfiksah@gmail.com}

%\thanks{This work was completed with the support of our \TeX-pert.}
%%----------Author 2
\author[Fatma KARAKU\c{S}]{Fatma KARAKU\c{S}}

\address{%
	Department of Mathematics,\\
	Faculty of Arts and Sciences\\
	Sinop University,\\
	57000 Sinop\\
	Turkey
}

\email{fkarakus@sinop.edu.tr}
%----------Author 3
\author{Keziban ORBAY}
\address{Mathematics and Science Education Department,\br
Education Faculty,\br
Amasya University,\br
05000 Amasya\br
Turkey}
\email{keziban.orbay@amasya.edu.tr}
%----------classification, keywords, date
\subjclass{53A04, 53A05, 53A35, 53Z05, 57R25}

\keywords{Fermi-Walker derivative, Fermi-Walker transport, Darboux frame, Non-rotating frame, Galilean space.}

\date{January 1, 2004}
%----------additions
%\dedicatory{To my boss}
%%% ----------------------------------------------------------------------

\begin{abstract}
In this study, we defined   Fermi-Walker derivative in Galilean space $\mathbb{G}^3$. Fermi-Walker transport and
non-rotating frame by using Fermi-Walker derivative are given in $\mathbb{G}^3$. Being conditions of Fermi-Walker transport and non-rotating frame are investigated along any curve for Frenet frame and Darboux frame. 
\end{abstract}

%%% ----------------------------------------------------------------------
\maketitle
%%% ----------------------------------------------------------------------
%\tableofcontents
\section{Introduction}
The universe is perceptible through observation. A relativistic observer $%
\gamma $ needs reference frames, for measurements of durations and of
(\textquotedblright geo\textquotedblright )metric quantities: the proper
time (\textquotedblright proper clock\textquotedblright ) is given by its
own canonical parameter running on an interval of the real numbers axis; the
restspaces are refered to \textquotedblright fixed\textquotedblright\
directions, maintained by gyroscopes focused toward \textquotedblright
fixed\textquotedblright\ celestial bodies.

The choice of an appropriate reference frame is a fundamental and
controversial problem in astronomy \cite{8}: one needs a \textquotedblright
center\textquotedblright\ and several \textquotedblright
fixed\textquotedblright\ directions. In a general relativistic setting, if $%
\gamma $ is freely falling, its restspaces are transported through
Levi-Civita parallelism, so a fix spacelike direction has, by definition, a
null covariant derivative \cite{9,10}. If $\gamma $ is not freely falling
i.e. for accelerated observes, the restspace are not transported by the
Levi-Civita parallelism, anymore. In this case, in order to define
"constant" directions, another parallelism is used: the Fermi-Walker
transport which is an isometry between the tangent space along $\gamma $ 
\cite{4,5,11,13,14}. Fermi-Walker transport is a process used to define a
coordinate system or reference frame in general relativity. All the
curvatures in the reference frame in due to the presence of mass-energy
density. These curvatures are not arbitrary spin or rotation of the frame.

"There are different transport laws such as parallel and Fermi-Walker
transport for a tensor along a given curve. The parallel transport for the
tensor along the given curve is defined as the law which makes that its
covariant derivative be zero \cite{3}. If the curve is a geodesic, then the
tangent vector will coincide at another point of the curve with its parallel
transported vector. Otherwise, the tangent vector will not coincide with its
parallel transported vector. In this case, there is Fermi-Walker's law that
another transport law. The Fermi-Walker transport of the tensor along the
given curve is defined as the law which makes that its the Fermi-derivative
along the curve be zero \cite{3}. If the curve is a geodesic, then
Fermi-Walker's transport coincides with parallel transport. Otherwise, this
is not the case. In general, the Fermi-Walker transport is not the parallel
transport"\cite{17}.

"A Fermi-Walker transported set of tetrad fields is the best approximation to
a non-rotating reference frame in the sense of Newtonian mechanics. It is
physically realized by a system of gyroscopes. Fermi-Walker transported
frames are important in lots of investigations. A frame that undergoes
linear and rotational acceleration can be described by the Frenet-Serret
frame. The relative rotational acceleration of a Frenet-Serret frame with
respect to a Fermi-Walker transported frame is taken to characterize
important phenomena, like the gyroscopic precession \cite{19}. Non-inertial
reference frames in Minkowski spacetime that undergo Fermi-Walker transport
are useful, for example, in the analysis of the inertial effects on a Dirac
particle" \cite{20}.

Parallel vector fields have important applications in differential geometry,
physics and especially in robotic kinematics. The tangent vector of the
curve is parallel along the curve if and only if $\nabla _{T}T=0$ in
Euclidean space. In this case, the curve is a geodesic in Euclidean space $%
\mathbb{E}^{n}$. Similarly, the curve which is on the surface is a geodesic
if and only if $\overline{\nabla }_{T}T=0$. Namely, the tangent vector is
parallel along the curve on the surface. All the straight lines are geodesic
curves in Euclidean space. I wonder if all the curves will be geodesic in
Euclidean space? The answer to this is hidden in the connection which is
obtained by using Fermi-Walker derivative. Indeed, the solution of $%
\widetilde{\nabla }_{T}T=0$, is provided for all curves in $\mathbb{E}^{n}$.
Accordingly, the curves and the lines are the same. That is, the curves
behave like the lines with respect to Fermi-Walker connection which is a
affine connection. \ \ 

In \cite{1,2}, Fermi--Walker derivative along any space curve was identified
and was given physical properties in $\mathbb{E}^{3}$.

In \cite{7}, Fermi-Walker derivative, Fermi-Walker transport and
non-rotating frame are analyzed for Bishop, Darboux and Frenet frames along
the curve in Euclidean space.

In \cite{16}, we have shown Fermi-Walker derivative, and non-rotating frame
are being conditions are analyzed in Minkowski space $\mathbb{E}_{1}^{3}.$

In \cite{17}, Fermi-Walker derivative is redefined in dual space $\mathbb{D}%
^{3}$. Fermi-Walker transport and non-rotating frame being conditions are
analyzed along the dual curves in dual space $\mathbb{D}^{3}$.

The notion of Fermi-Walker derivative, it shows us one method, which is used
for defining \textquotedblright constant\textquotedblright direction, that
may contain lots of condition to have Fermi-Walker transport or non-rotating
frame. The condition of Fermi-Walker transport depends on a solution that
contains differential equation system which is not always easy to find the
answer. Therefore, it is important to analyze this concept. In this paper,
Fermi-Walker derivative, Fermi-Walker transport and non-rotating frame
concepts are defined along any curve and the notions have been analyzed for
both isotropic and non-isotropic vector fields.

We have investigated Fermi-Walker derivative and geometric applications in
various spaces like Euclidean, Lorentz and Dual space up to now \cite%
{7,16,17}. The Fermi-Walker derivative which is defined in $\mathbb{G}^{3}$
is different from them so far since it is examined for both isotropic and
non-isotropic vector fields.

Firstly, Fermi-Walker derivative is redefined for any isotropic vector
fields along a curve which is in Galilean space. We have proved Fermi-Walker
derivative that is defined for the isotropic vector fields coincides with
Fermi derivative which is defined in any surface. We have shown Fermi-Walker
derivative which is defined for any non-isotropic vector fields is not
coincides with derivative of the vector fields. Being Fermi-Walker transport
conditions are examined for any isotropic and non-isotropic vector fields.
We have shown that if the curve is a line or a planar curve which is not a
line then the non-zero isotropic vector field is Fermi-Walker transported.
We have obtained that Frenet frame is not a non-rotating frame if the curve
is not a line. 

Then, similar investigations have been made for any isotropic and
non-isotropic vector fields with respect to the Darboux frame in Galilean
space. We have proved while the curve is a line the Darboux frame is a
non-rotating frame.

\section{Preliminaries}

In non-homogeneous coordinates the group of motion of $3$ - dimensional Galilean Geometry (i.e. the group of isometries of $\mathbb{G}^3$) has the form
define:
\begin{eqnarray}
\overline{x} &=&a_1+x,  \notag \\
\overline{y} &=&a_2+a_3x+y\cos \varphi +z\sin \varphi , \\
\overline{z} &=&a_4+a_5x-y\sin \varphi +z\cos \varphi,  \notag
\end{eqnarray}%
where $a_1, a_2, a_3, a_4, a_5$, and $\varphi$ are real numbers \cite{pav}. %Note that the group of motions of $G_{1}^{3}$ is a six-parameter group. It leaves invariant the absolute figure as well as the pseudo-Galilean distance of points.

If the first component of a vector is not zero, then the vector is called as non-isotropic, otherwise it is called isotropic vector \cite{pav}.
%A vector $V(v_1, v_2, v_3)$ in the $G_1^3$ is said to be non-isotropic if $v_1\neq 0$. Thus,
%all unit non-isotropic vectors are of the form $\left( 1,v_2,v_3\right) $. For
%isotropic vectors $x=0$ holds and four types of isotropic vectors: spacelike 
%$(v_2^{2}-v_3^{2}>0)$, timelike $(v_2^{2}-v_3^{2}<0)$ and two types of lightlike
%vectors $(v_2=\pm v_3)$  

The scalar product of two vectors $\mathbf{v}=(v_{1},v_{2},v_{3})$ and $\mathbf{w}=(w_{1},w_{2},w_{3})$ in $\mathbb{G}^3$ is defined by
\begin{equation}{\label{galpro}}
\langle\mathbf{v}, \mathbf{w} \rangle= \left\{
\begin{array}{lr}
v_{1}w_{1} , &  \text{if } v_{1}\neq 0 \text{ or } w_{1}\neq 0\, \ \  \ \\
v_{2}w_{2}+v_{3}w_{3} ,&  \text{if } v_{1}=0 \text{ and } w_{1}=0\,.
\end{array}\right.
\end{equation}
If $\langle\mathbf{v}, \mathbf{w} \rangle=0$, then $\mathbf{v}$ and $\mathbf{w}$
are perpendicular. 
%In particular, every isotropic vector is perpendicular to every non-isotropic vector.
The norm of $\mathbf{w}$ is defined by
$$\Vert \mathbf{w}\Vert_{G}=\sqrt{\langle\mathbf{w}, \mathbf{w} \rangle}.$$
Also, the Galilean cross product of two vectors defined by
\begin{equation}
\mathbf v\times _{G}\mathbf w=%
\begin{vmatrix}
0 & \mathbf e_{2} &\mathbf {e_{3}} \\ 
v_{1} & v_{2} & v_{3} \\ 
w_{1} & w_{2} & w_{3}%
\end{vmatrix}%
\end{equation}%
\newline
for $\mathbf{v=}\left( v_{1},v_{2},v_{3}\right) $ and $\mathbf{w=}\left(
w_{1},w_{2},w_{3}\right) $ \cite{pav1}. 

Let  $\alpha :I\subset \mathbb R\rightarrow \mathbb{G}^3$ be a curve parameterized by arc length (we abbreviate as p.b.a.l) with curvature $\kappa>0$ and torsion $\tau$.
% then three vector fields $T, N$ and $B$ on $\alpha$ are unit vector fields that are mutually orthogonal at each point. The vector fields $T, N, B$  are called the \textit{Frenet frame field} on $\alpha$.
If $\alpha$ is a unit speed curve,
\begin{equation*}
\alpha \left( x\right) =\left( x,y\left( x\right) ,z\left( x\right) \right) ,
\end{equation*}%
then the Frenet frame fields are given by
\begin{eqnarray}
T\left(x\right) &=&\alpha ^{\prime }\left( x\right), 
%=\left( 1,y^{\prime}\left( x\right) ,z^{\prime }\left( x\right) \right)
\notag \\
N\left( x\right) &=& \frac{\alpha''(x)}{\Vert \alpha''(x)\Vert_{G}}
%\frac{1}{\kappa \left( x\right) }\left( 0,y^{\prime \prime }\left( x\right) ,z^{\prime \prime }\left( x\right) \right) ,
\\
B\left( x\right) &=&T(x)\times _{G}N(x) \\&=&\frac{1}{\kappa \left( x\right) }\left( 0,
-z^{\prime \prime }\left( x\right) , y^{\prime \prime }\left(
x\right) \right) ,  \notag
\end{eqnarray}%
where $\kappa \left( x\right) $ and $\tau \left(
x\right) $ are defined by%
\begin{equation}
\kappa \left( x\right) ={\Vert \alpha''(x) \Vert }_{G}, { \ \ }\tau
\left( x\right) =\frac{\det \left( \alpha ^{\prime }\left( x\right) ,\alpha
	^{\prime \prime }\left( x\right) ,\alpha ^{\prime \prime \prime }\left(
	x\right) \right) }{\kappa ^{2}\left( x\right) }\,.
\end{equation}%

%Also, where $\varepsilon =\pm 1$ determined by the criterion $\det (T,N,B)=\pm1$. 
%Also, where $\times _{G}$ is the Galilean cross product  defined by
%\begin{equation}
%\mathbf v\times _{G}\mathbf w=%
%\begin{vmatrix}
%0 & \mathbf e_{2} &\mathbf {e_{3}} \\ 
%v_{1} & v_{2} & v_{3} \\ 
%w_{1} & w_{2} & w_{3}%
%\end{vmatrix}%
%\end{equation}%
%\newline
%for $\mathbf{v=}\left( v_{1},v_{2},v_{3}\right) $ and $\mathbf{w=}\left(
%w_{1},w_{2},w_{3}\right) $ \cite{pav1}. 
%This means that
%\begin{equation}
%\left\vert y^{\prime \prime }\left( x\right) ^{2}-z^{\prime \prime }\left(
%x\right) ^{2}\right\vert =\varepsilon \left( y^{\prime \prime }\left(
%x\right) ^{2}-z^{\prime \prime }\left( x\right) ^{2}\right) .
%\end{equation}%
%The curve $\alpha $ is time-like (resp. space-like) if $N(x) $
%is a space-like (resp. time-like) vector. From equation (2.4), we can infer that the principal normal vector is space-like if $\varepsilon =+1$ and time-like if $
%\varepsilon =-1.$ 
The vectors $T, N $ and $B$ are called the vectors of the tangent, the principal normal
and the binormal vector field, respectively \cite{pav1}. Therefore, the Frenet-Serret formulae can be written as
\begin{equation}
\begin{bmatrix}
T \\ 
N \\ 
B%
\end{bmatrix}%
^{\prime }=%
\begin{bmatrix}
0 & \kappa & 0 \\ 
0 & 0 & \tau \\ 
0 &- \tau & 0%
\end{bmatrix}%
\begin{bmatrix}
T \\ 
N \\ 
B%
\end{bmatrix}\,.
\end{equation}%

\begin{thm}
For any curve $\alpha :I\subset \mathbb R\rightarrow \mathbb{G}^{3}$, we call $D\left( x\right) =\tau
\left( x\right) T\left( x\right) +\kappa \left( x\right) B\left( x\right) $
a \textit{Darboux vector of }$\alpha$ \cite{tsam}.   By using the darboux vector,
Frenet-Serret formulas can be rewritten as follows:
\begin{eqnarray}
T'\left( x\right) &=&D\left( x\right) \times _{G} T\left( x\right)  \notag \\
N'\left( x\right) &=&D\left( x\right) \times _{G} N\left( x\right) \\
B'\left( x\right) &=&D\left( x\right) \times _{G} B\left( x\right)  \notag.
\end{eqnarray}
\end{thm}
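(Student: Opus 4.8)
The plan is to verify the three Darboux equations directly by computing each Galilean cross product $D \times_G T$, $D \times_G N$, $D \times_G B$ and comparing with the known Frenet–Serret derivatives $T' = \kappa N$, $N' = \tau B$, $B' = -\tau N$. Since $D = \tau T + \kappa B$ and the cross product $\times_G$ is bilinear, it suffices to know the six elementary products $T \times_G T$, $T \times_G N$, $T \times_G B$, $B \times_G T$, $B \times_G N$, $B \times_G B$, and then assemble them. In Galilean geometry the cross product defined by the determinant with a zero in the first row of $\mathbf{e}_1$ is \emph{not} the usual alternating product, so the first step is to establish the correct multiplication table for $\{T,N,B\}$ under $\times_G$; this is where care is needed.

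First I would recall that for a unit-speed curve the tangent is $T = (1, y', z')$, a non-isotropic vector, while $N$ and $B$ are isotropic (their first components vanish, as is visible from the formulas $N = \alpha''/\kappa$ and $B = \tfrac{1}{\kappa}(0, -z'', y'')$). Writing $N = (0, n_2, n_3)$ and $B = (0, b_2, b_3)$ with $(n_2, n_3)$, $(b_2, b_3)$ an orthonormal pair in the Euclidean $yz$-plane, a direct determinant computation gives $T \times_G N = (0, -n_3, n_2)$ up to the first-component contribution, and one checks this equals $B$; similarly $T \times_G B = -N$. For the purely isotropic products, $N \times_G B$ has first component $0$ and the remaining $2\times 2$ determinant $n_2 b_3 - n_3 b_2 = \pm 1$; the remaining entries involve $v_1 = w_1 = 0$ so they vanish, giving $N \times_G B = (0,0,0)$ (the isotropic vectors "cross" to zero in the last two slots because those slots only see the first-row entries $v_1, w_1$). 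The key structural facts are therefore $T \times_G N = B$, $T \times_G B = -N$, $N \times_G B = 0$, $N \times_G N = B \times_G B = 0$, and $T \times_G T = 0$, together with antisymmetry of $\times_G$.

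Granting this table, the verification is immediate:
\begin{eqnarray*}
D \times_G T &=& \tau\, (T \times_G T) + \kappa\, (B \times_G T) = \kappa N = T', \\
D \times_G N &=& \tau\, (T \times_G N) + \kappa\, (B \times_G N) = \tau B = N', \\
D \times_G B &=& \tau\, (T \times_G B) + \kappa\, (B \times_G B) = -\tau N = B',
\end{eqnarray*}
which are exactly the Frenet–Serret formulas in the matrix form stated just above. The main obstacle is purely the first step: getting the Galilean cross-product multiplication table right, because the definition via the determinant with the leading row $(0, \mathbf{e}_2, \mathbf{e}_3)$ behaves differently depending on whether the arguments are isotropic, and one must be consistent with the sign conventions already fixed by the formula $B = T \times_G N$ in the preliminaries. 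Once that table is in hand, the proof is a two-line bilinear expansion, so I would devote most of the write-up to carefully deriving the products $T \times_G N$, $T \times_G B$, and $B \times_G N$ from the coordinate expressions, and then simply substitute.
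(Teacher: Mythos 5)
Your verification is correct: the multiplication table $T\times_G T=0$, $T\times_G N=B$, $T\times_G B=-N$, and $X\times_G Y=0$ for isotropic $X,Y$ (the last two components of $\mathbf v\times_G\mathbf w$ are $v_3w_1-v_1w_3$ and $v_1w_2-v_2w_1$, which vanish when $v_1=w_1=0$) combines with bilinearity and antisymmetry to give $D\times_G T=\kappa N$, $D\times_G N=\tau B$, $D\times_G B=-\tau N$, exactly the Frenet--Serret formulas. The paper supplies no proof of this theorem (it only cites \cite{tsam}), and your direct coordinate computation is precisely the argument one would write; the only cosmetic quibble is that in your discussion of $N\times_G B$ the $2\times2$ minor $n_2b_3-n_3b_2=\pm1$ is irrelevant, since it multiplies the zero entry of the first row of the defining determinant.
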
 
We define a vector $\widetilde{D}\left( x\right) =\left( \frac{\tau }{%
	\kappa }\right) \left( x\right) t\left( x\right) +b\left( x\right) $ and we
call it a modified Darboux vector along $\alpha.$

For more on Galilean Geometry, one can refer to \cite{pav, pav1, ros, yag} and references there in.

\begin{defn}\label{fermi dfn}
	$X$ is any vector field and $\alpha$ is
	unit-speed any curve in Galile space, then%
	\begin{equation}\label{fermi drv}
	\widetilde{\nabla }_{{T}}{X}=\nabla _{{T}}%
{X}-\left\langle {T},{X}\right\rangle 
{A}+\left\langle {A},{X}\right\rangle{T}
	\end{equation}%
	defined as $\widetilde{\nabla }_{{T}}{X}$ derivative is
	called Fermi-Walker derivative in Galilean space $\mathbb{G}^3$. Here ${T}$ is the tangent vector
	field of ${\alpha} $ and ${A}=\nabla _{{T}}{T}$
\end{defn}

\begin{defn}
	In Galilean space $\mathbb{G}^{3}$, let ${\alpha}:I\subset \mathbb{R}\rightarrow \mathbb{G}^{3}$ be a
	curve and ${X}$ be any vector field along the curve ${
		\alpha}$. If the Fermi-Walker derivative of the vector field ${X}$ vanishes, i.e., if $\widetilde{\nabla}_{{T}}{X}=0$, then ${X}$ is called the Fermi--Walker transported vector field along the curve.
\end{defn}

\begin{defn}
	Let a unit speed curve ${\alpha}:I\subset \mathbb{R}\rightarrow\mathbb{G}^{3}$ together
	with orthonormal vector field ${U},{V},{W}$
	along ${\alpha}$ be given. If the Fermi-Walker derivative of the
	vector field vanish, then $\left\{ U,V,W\right\} $ is called non-rotating
	frame.
\end{defn}

%
%\begin{verbatim}\begin{document}\end{verbatim}
%starts -- as always -- the article.

\section{Frenet Frame and Fermi--Walker Derivative}

In this section, Fermi-Walker derivative, Fermi-Walker transport and non-rotating frame concepts have been investigated along any curve which is in Galilean space. Fermi-Walker derivative has been redefined along a curve for both isotropic and non-isotropic vector fields. The vector fields which are Fermi-Walker transported are analyzed along any curve in $\mathbb{G}^3$. Then; we show that the Frenet frame whether it is a non-rotating frame or not.
%In this section, we investigate Fermi-Walker derivative along any
%curve in Galilean space. Let ${\alpha}:I\subset \mathbb{R}\rightarrow\mathbb{G}^{3}$ be unit-speed curve with Frenet vectors ${T},$ ${N}$ and ${B}$, and ${X}$ is any vector field along the curve ${\alpha }$ with the corresponding connection $\nabla.$ Firstly we will give a different definition of the Fermi-Walker derivative with Frenet frame and we will investigate the Frenet frame whether it is a non-rotating frame or not.
%According to Frenet
%frame we will a lemma of generalized Fermi-Walker derivative and investigate
%the Frenet frame whether it is a non-rotating frame or not.

\begin{lem}
	\label{L1}Let ${\alpha}:I\subset \mathbb{R}\rightarrow\mathbb{G}^{3}$ be a curve in
	Galilean space $\mathbb{G}^{3}$ and ${X}$ is any vector field along the curve $
	{\alpha}({x})$, Fermi-Walker derivative can be expressed as
	\begin{enumerate}
		\item[i)] If $X$ is an isotropic vector field along the $
		{\alpha}({x})$, then Fermi-Walker derivative of $X$ is given by 
	\begin{equation*}
	\widetilde{\nabla }_{{T}}{X}=\nabla _{{T}}%
	{X}+\kappa{\langle N, X\rangle}T.
	\end{equation*}
	\item[ii)] If $X$ is a non-isotropic vector field along the $
	{\alpha}({x})$, then Fermi-Walker derivative of $X$ is given by 
		\begin{equation*}
	\widetilde{\nabla }_{{T}}{X}=\nabla _{{T}}%
	{X}-\kappa{\langle T, X\rangle}N.
	\end{equation*}
		\end{enumerate}
\end{lem}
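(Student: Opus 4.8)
The plan is to start directly from the defining formula \eqref{fermi drv}, namely
\[
\widetilde{\nabla}_{T}X=\nabla_{T}X-\langle T,X\rangle A+\langle A,X\rangle T,
\qquad A=\nabla_{T}T,
\]
and to simplify the two correction terms using the structure of the Galilean scalar product \eqref{galpro} together with the Frenet equations. The key observation is that $A=\nabla_{T}T=T'=\kappa N$ by the Frenet--Serret formulae, and that $N$ is by construction an isotropic vector (its first component vanishes, since $\alpha''=(0,y'',z'')$). So from the outset $A=\kappa N$ is isotropic, and $\langle T,X\rangle$, $\langle A,X\rangle=\kappa\langle N,X\rangle$ will be evaluated by splitting into the two cases of the definition of $\langle\,\cdot\,,\cdot\,\rangle$.

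For part (i), $X$ isotropic: here both $N$ and $X$ have zero first coordinate, while $T$ has first coordinate $1$. First I would compute $\langle A,X\rangle=\kappa\langle N,X\rangle$ using the \emph{second} branch of \eqref{galpro} (both first components zero), which is a genuine Euclidean-type inner product of the last two coordinates and need not vanish. Then $\langle T,X\rangle$: since $T$ has nonzero first component, the \emph{first} branch of \eqref{galpro} applies, giving $\langle T,X\rangle=1\cdot X_{1}=X_{1}=0$ because $X$ is isotropic. Hence the term $-\langle T,X\rangle A$ drops out entirely, and substituting leaves exactly $\widetilde{\nabla}_{T}X=\nabla_{T}X+\kappa\langle N,X\rangle T$, as claimed.

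For part (ii), $X$ non-isotropic: now $X_{1}\neq 0$, so $\langle T,X\rangle=X_{1}\neq 0$, and this term survives, contributing $-\langle T,X\rangle A=-\langle T,X\rangle\kappa N=-\kappa\langle T,X\rangle N$. For the other term, $\langle A,X\rangle=\kappa\langle N,X\rangle$: since $X_{1}\neq 0$, the first branch of \eqref{galpro} governs $\langle N,X\rangle$, which equals $N_{1}\cdot X_{1}=0$ because $N_{1}=0$. So $\langle A,X\rangle=0$ and the term $+\langle A,X\rangle T$ vanishes. Substituting into \eqref{fermi drv} gives $\widetilde{\nabla}_{T}X=\nabla_{T}X-\kappa\langle T,X\rangle N$, the stated formula.

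The proof is essentially a bookkeeping exercise, so there is no deep obstacle; the one point that requires care—and the only place a sign or a vanishing term could be mishandled—is consistently tracking which branch of the piecewise scalar product \eqref{galpro} applies to each of $\langle T,X\rangle$ and $\langle N,X\rangle$ in each case, and remembering that this hinges entirely on whether $X$ is isotropic, together with the facts $T_{1}=1$ and $N_{1}=0$. Once those are pinned down, the two formulas follow by direct substitution.
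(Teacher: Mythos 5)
Your proposal is correct and follows exactly the route the paper intends: the paper's own proof is a one-line remark ("using Definition \ref{fermi dfn} and equation \eqref{galpro}"), and your argument simply fills in the case analysis on the two branches of the Galilean scalar product, using $A=\nabla_{T}T=\kappa N$ with $T_{1}=1$ and $N_{1}=0$. No gaps; the bookkeeping of which branch of \eqref{galpro} applies in each case is handled correctly.
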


\begin{proof}
	Using definition \ref{fermi dfn} and the equation \eqref{galpro}, the above equations are obtained.
%	\begin{equation*}
%	\widetilde{\nabla }_{\widetilde{T}}\widetilde{X}=\nabla _{\widetilde{T}}%
%	\widetilde{X}-\left\langle \widetilde{T},\widetilde{X}\right\rangle \nabla _{%
%		\widetilde{T}}\widetilde{T}+\left\langle \nabla _{\widetilde{T}}\widetilde{T}%
%	,\widetilde{X}\right\rangle \widetilde{T}
%	\end{equation*}%
%	by using features of vector product in Galilean space,%
%	\begin{equation*}
%	\widetilde{\nabla }_{\widetilde{T}}\widetilde{X}=\nabla _{\widetilde{T}}%
%	\widetilde{X}+\widetilde{\kappa }(\widetilde{X}\wedge \widetilde{B})
%	\end{equation*}%
%	which gives the result.
\end{proof}
\begin{cor}
		Let ${X}$ be an isotropic vector field along the curve ${\alpha}$. Then, the Fermi-Walker derivative coincides with Fermi derivative. 
\end{cor}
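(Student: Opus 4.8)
The plan is to read the corollary off directly from Lemma~\ref{L1}\,(i), by matching its right-hand side with the defining expression of the Fermi derivative of a vector field along a curve on a surface, once that expression is evaluated with the degenerate Galilean inner product \eqref{galpro}. The Fermi derivative has an expression of the same algebraic shape as Definition~\ref{fermi dfn}, namely $\nabla_{T}X-\langle X,T\rangle A+\langle X,A\rangle T$ with $A=\nabla_{T}T$; the substantive point of working in $\mathbb{G}^{3}$ is that the two scalar products occurring here must be computed branch by branch according to \eqref{galpro}, and the outcome genuinely depends on whether $X$ is isotropic.

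So the first step is to specialize to an isotropic $X$, i.e.\ one whose first coordinate vanishes. Since $T=\alpha'=(1,y',z')$ is non-isotropic, the product $\langle X,T\rangle$ is computed by the first branch of \eqref{galpro}, equals the product of the first coordinates, and hence vanishes; the acceleration term therefore drops out. For the remaining term, the Frenet equations give $A=\nabla_{T}T=\kappa N$, and $N$ is isotropic, so $A$ is isotropic as well; consequently $\langle X,A\rangle$ is computed by the (bilinear) second branch of \eqref{galpro} and equals $\kappa\langle N,X\rangle$. Substituting these two evaluations, the Fermi derivative of an isotropic $X$ reduces to $\nabla_{T}X+\kappa\langle N,X\rangle T$, which is exactly the expression furnished by Lemma~\ref{L1}\,(i) for $\widetilde{\nabla}_{T}X$. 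Hence the two derivatives agree, which is the claim.

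The only place that needs care — and it is where the isotropy of $X$ is really used — is the case analysis in \eqref{galpro}: one must verify that $\langle X,T\rangle$ is evaluated by the ``some first coordinate nonzero'' rule, while $\langle X,A\rangle$ and $\langle N,X\rangle$ are evaluated by the ``both first coordinates zero'' rule. This dichotomy is precisely what annihilates the acceleration term $\langle X,T\rangle A$ for isotropic fields; for a non-isotropic $X$ that term no longer vanishes, which is why the analogous comparison fails in that case (compare Lemma~\ref{L1}\,(ii)). Beyond this bookkeeping with the two branches of the Galilean product, no real obstacle remains.
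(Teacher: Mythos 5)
Your branch-by-branch evaluation of the Galilean product is correct and is exactly the computation behind Lemma~\ref{L1}\,(i): since $T=(1,y',z')$ is non-isotropic and $X$ has vanishing first component, the first branch of \eqref{galpro} gives $\langle T,X\rangle=0$, while $A=\kappa N$ is isotropic so the second (bilinear) branch gives $\langle A,X\rangle=\kappa\langle N,X\rangle$, whence $\widetilde{\nabla}_{T}X=\nabla_{T}X+\kappa\langle N,X\rangle T$. Up to that point you have simply re-proved Lemma~\ref{L1}\,(i), which the paper already invokes; the paper itself states this corollary with no proof at all, treating it as immediate.

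The genuine gap is in the final identification. You declare that the Fermi derivative ``has an expression of the same algebraic shape as Definition~\ref{fermi dfn}, namely $\nabla_{T}X-\langle X,T\rangle A+\langle X,A\rangle T$.'' But that is literally the defining formula of the Fermi--Walker derivative in this paper, so under your reading the two operators coincide for \emph{every} vector field, isotropic or not, and the corollary is a tautology in which the isotropy hypothesis (and your entire case analysis of \eqref{galpro}) does no work toward the stated conclusion --- it only simplifies a formula you have already declared equal to the other one. For the corollary to have content, the ``Fermi derivative \ldots defined in any surface'' must be an a priori different object (in the source \cite{7} it is built from the induced covariant derivative on the surface, i.e.\ from the geodesic-curvature part $\kappa_g Q$ of the acceleration rather than the full $A=\kappa N$), and the proof must verify that its Galilean evaluation on isotropic fields also reduces to $\nabla_{T}X+\kappa\langle N,X\rangle T$. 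Neither the paper (which never defines ``Fermi derivative'') nor your proposal supplies that definition or that verification, so the comparison the corollary asserts is not actually carried out.
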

\begin{cor}
	Let ${X}$ be a non-zero isotropic vector field along the curve ${\alpha}(x)$ which is not a line. Fermi--Walker derivative coincides with derivative of ${X}$ if and only if the vector field ${X}$ is linearly dependent with the binormal vector field ${B}.$
\end{cor}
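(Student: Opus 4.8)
The plan is to read the statement straight off part (i) of Lemma~\ref{L1}. Since $X$ is isotropic, that lemma gives
\begin{equation*}
\widetilde{\nabla}_{T}X=\nabla_{T}X+\kappa\,\langle N,X\rangle\,T,
\end{equation*}
so the Fermi--Walker derivative of $X$ equals the ordinary derivative $\nabla_{T}X$ if and only if $\kappa\,\langle N,X\rangle\,T=0$. Because $\alpha$ is not a line we have $\kappa(x)>0$, and $T$ is a unit (hence nonzero) vector field; therefore this condition is equivalent to $\langle N,X\rangle=0$ along $\alpha$.

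It then remains to identify, for a nonzero isotropic field $X$, the condition $\langle N,X\rangle=0$ with $X$ being linearly dependent on $B$. Writing $\alpha(x)=(x,y(x),z(x))$, both $N=\alpha''/\Vert\alpha''\Vert_{G}$ and $B=\frac{1}{\kappa}(0,-z'',y'')$ have vanishing first component, i.e.\ they are isotropic, and by construction $\{N,B\}$ is orthonormal for $\langle\,,\,\rangle$; hence $\{N,B\}$ is an orthonormal basis of the $2$-plane of isotropic vectors. Consequently every isotropic field decomposes as $X=\langle N,X\rangle N+\langle B,X\rangle B$. If $\langle N,X\rangle=0$, then $X=\langle B,X\rangle B$, so $X$ is a scalar multiple of $B$, the scalar being nonzero since $X\neq0$; conversely, if $X=\lambda B$ for some function $\lambda$, then $\langle N,X\rangle=\lambda\langle N,B\rangle=0$. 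Combining this with the previous paragraph gives the asserted equivalence.

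There is essentially no hard step here; the only point that needs care is the bookkeeping forced by the case split in the Galilean scalar product~\eqref{galpro}. One must observe that for a curve of the form $(x,y(x),z(x))$ the vectors $N$ and $B$ are \emph{both} isotropic, so that $\langle N,X\rangle$, $\langle B,X\rangle$ and $\langle N,B\rangle$ are all evaluated via the Euclidean product on the last two coordinates; this is exactly what makes ``orthogonal to $N$'' coincide with ``proportional to $B$'' inside the isotropic plane. The hypothesis that $\alpha$ is not a line enters only to ensure $\kappa>0$, which is what permits the cancellation of $\langle N,X\rangle$ in the first paragraph.
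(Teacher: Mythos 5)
Your proof is correct and follows essentially the same route as the paper's: both reduce to Lemma~\ref{L1}(i) and the decomposition of an isotropic field in the orthonormal basis $\{N,B\}$ of the isotropic plane, with $\kappa>0$ (the curve not being a line) forcing the $N$-component to vanish. You merely make explicit the identification of the coefficient of $N$ with $\langle N,X\rangle$ and the justification that $\{N,B\}$ spans the isotropic plane, points the paper's one-line proof leaves implicit.
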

\begin{proof}
	Using lemma \ref{L1}(i) and $X=\mu N+\lambda B$ 
$$\widetilde{\nabla }_{{T}}{X}=\nabla _{{T}}{X}+\mu\kappa T\notag$$ is obtained. Therefore, $\widetilde{\nabla }_{{T}}{X}=\nabla _{{T}}{X}$ iff $\mu=0.$
\end{proof}

\begin{cor}
	Let ${X}$ be a non-isotropic vector field along the curve ${\alpha}(x)$. Then, Fermi--Walker derivative is not coincide with derivative of ${X}.$
\end{cor}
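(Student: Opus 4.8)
The plan is to read the statement straight off Lemma~\ref{L1}(ii), so that the whole argument reduces to checking that the correction term appearing there cannot be zero. First I would record that, by the blanket convention on $\alpha$ in the Preliminaries, the curve is unit speed, so its tangent field has the form $T(x)=\alpha'(x)=(1,y'(x),z'(x))$; in particular the first component of $T$ is $1\neq 0$. Since $X$ is non-isotropic, its first component $X_1$ is nonzero by definition, and hence the first branch of the Galilean scalar product \eqref{galpro} is the one in force, giving $\langle T,X\rangle = 1\cdot X_1 = X_1 \neq 0$.

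Next I would invoke Lemma~\ref{L1}(ii), which gives
\[
\widetilde{\nabla}_{T}X-\nabla_{T}X=-\kappa\,\langle T,X\rangle\,N .
\]
The standing hypothesis on $\alpha$ is $\kappa>0$, and $N$ is a unit vector, hence nonzero; together with $\langle T,X\rangle\neq 0$ from the previous step this forces the right-hand side to be a nowhere-vanishing vector field along $\alpha$. Therefore $\widetilde{\nabla}_{T}X\neq\nabla_{T}X$, which is exactly the claim.

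The main (and essentially only) point needing care is the case distinction in \eqref{galpro}: one must observe $T_1=1\neq 0$ before evaluating $\langle T,X\rangle$, so that the quadratic-in-$(y',z')$ branch never intervenes, and one must use the convention $\kappa>0$ to exclude the degenerate possibility $\kappa\langle T,X\rangle N=0$ (which would otherwise occur for a line, where $N$ is in any case undefined). Beyond that there is no obstacle — in contrast to the Fermi–Walker transport assertions, no differential equation has to be solved here.
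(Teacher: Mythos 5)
Your proof is correct and matches the paper's (implicit) argument: the paper states this corollary without proof, treating it as immediate from Lemma~\ref{L1}(ii), and your verification that the correction term $-\kappa\langle T,X\rangle N$ is nowhere zero — using $T_1=1$, the non-isotropy of $X$, the standing hypothesis $\kappa>0$, and $N\neq 0$ — is exactly the intended justification.
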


\begin{thm}\label{T1}
	Let ${\alpha }$ be a curve in $\mathbb{G}^{3},
	{X}={\lambda }_{1}{T}+{\lambda }_{2}{N}+{\lambda }_{3}{B}$ be any non-isotropic vector field
	along ${\alpha}.$ The vector field ${X}$ is
	Fermi--Walker transported along the curve ${\alpha }$ if and only if
	\begin{align*}
	{\lambda }_{1}({x})& =\text{const,} \\
	{\lambda }_{2}({x})& ={c}_{1}\cos\	\Big(\int\limits_{1}^{{x}}{\tau }({t})d{t}\Big)+{c}_{2}\sin \Big(\int\limits_{1}^{{x}}{\tau }(
	{t})d{t}\Big) \\
	{\lambda }_{3}({x})& ={c}_{2}\cos\
	\Big(\int\limits_{1}^{{x}}{\tau}({t})d{t}\Big)-{c}_{1}\sin \Big(\int\limits_{1}^{{x}}{\tau }(
	{t})d{t}\Big)
	\end{align*}
	where ${c}_{1},{c}_{2}$ are constants of integration and ${
		\lambda }_{1}$,${\lambda }_{2}$,${\lambda }_{3}$ are
	continuously differentiable functions of arc length parameter $%
	{x}$.
\end{thm}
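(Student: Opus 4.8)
The plan is to compute $\widetilde{\nabla}_{T}X$ directly using Lemma~\ref{L1}(ii), since $X$ is non-isotropic, and then set the result equal to zero. First I would write $\widetilde{\nabla}_{T}X = \nabla_{T}X - \kappa\langle T,X\rangle N$, and observe that since $T$ is the tangent vector (with first component $1$, hence non-isotropic) and $X = \lambda_1 T + \lambda_2 N + \lambda_3 B$, the Galilean scalar product gives $\langle T,X\rangle = \lambda_1$. Next I would expand $\nabla_{T}X$ by differentiating $X$ along the curve and applying the Frenet--Serret formulae from the excerpt: this yields
\begin{equation*}
\nabla_{T}X = \lambda_1' T + \lambda_1 \kappa N + \lambda_2' N + \lambda_2 \tau B + \lambda_3' B - \lambda_3 \tau N.
\end{equation*}
Substituting into the Fermi--Walker derivative, the two $\lambda_1\kappa N$ terms cancel, leaving
\begin{equation*}
\widetilde{\nabla}_{T}X = \lambda_1' T + (\lambda_2' - \lambda_3 \tau) N + (\lambda_3' + \lambda_2 \tau) B.
\end{equation*}

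Setting $\widetilde{\nabla}_{T}X = 0$ and using that $\{T,N,B\}$ is a frame, I would read off the component equations $\lambda_1' = 0$, $\lambda_2' = \lambda_3 \tau$, and $\lambda_3' = -\lambda_2 \tau$. The first gives $\lambda_1 = \text{const}$ immediately. For the remaining coupled system, the standard trick is to introduce the antiderivative $\theta(x) = \int_1^x \tau(t)\,dt$, so $\theta' = \tau$, and then verify by direct differentiation that
\begin{align*}
\lambda_2(x) &= c_1 \cos\theta(x) + c_2 \sin\theta(x), \\
\lambda_3(x) &= c_2 \cos\theta(x) - c_1 \sin\theta(x)
\end{align*}
solves the system: indeed $\lambda_2' = \tau(-c_1\sin\theta + c_2\cos\theta) = \tau \lambda_3$ and $\lambda_3' = \tau(-c_2\sin\theta - c_1\cos\theta) = -\tau\lambda_2$, as required. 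Conversely, since this is a linear first-order ODE system with continuous coefficients, the solution is unique given initial data $\lambda_2(1), \lambda_3(1)$, which fixes $c_1, c_2$; hence every solution has this form, giving the ``only if'' direction.

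There is no serious obstacle here — the argument is essentially a one-line frame computation followed by solving an elementary rotation-type ODE. The only point requiring a little care is justifying the ``if and only if'': one must note that vanishing of $\widetilde{\nabla}_T X$ is equivalent to the vanishing of all three frame components (which uses linear independence of $\{T,N,B\}$ along the curve), and that the explicit formulas exhaust all solutions of the system, which follows from the existence–uniqueness theorem for linear ODEs. I would also remark that the coupled pair $(\lambda_2,\lambda_3)$ undergoes a Fermi--Walker-type rotation governed purely by the torsion $\tau$, which is the geometric content of the statement.
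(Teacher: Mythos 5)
Your proposal is correct and follows essentially the same route as the paper: apply Lemma~\ref{L1}(ii), expand with the Frenet--Serret formulae to get $\widetilde{\nabla}_{T}X=\lambda_1'T+(\lambda_2'-\tau\lambda_3)N+(\lambda_3'+\tau\lambda_2)B$, and solve the resulting rotation-type ODE system. Your added remarks on uniqueness of the ODE solution and linear independence of the frame only make explicit what the paper leaves implicit.
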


\begin{proof}
	By the lemma \ref{L1} ii), 
	\begin{equation*}
	\widetilde{\nabla}_{{T}}{X}=\Big(\frac{d{\lambda }_{1}}{
		d{x}}\Big){T}+\Big(\frac{d{\lambda }_{2}}{d{x
	}}-{\tau }{\lambda }_{3}\Big){N}+\Big(\frac{d
		{\lambda }_{3}}{d{x}}+{\tau }{
		\lambda }_{2}\Big){B}
	\end{equation*}
	is obtained. ${X}$ is Fermi--Walker transported along the curve iff 
	\begin{align*}
	\frac{d{\lambda }_{1}}{d{x}}& =0, \\
	\frac{d{\lambda }_{2}}{d{x}}-{\tau }{
		\lambda }_{3}& =0, \\
	\frac{d{\lambda }_{3}}{d{x}}+{\tau }{\lambda }_{2}& =0.
	\end{align*}
	From the solution of the equation system, 
	\begin{align*}
	{\lambda }_{1}& =const., \\
	{\lambda }_{2}& ={c}_{1}\cos \Big(\int\limits_{1}^{
		{x}}{\tau }({t})d{t}\Big)+{c}
	_{2}\sin \Big(\int\limits_{1}^{{x}}{\tau }({t})d{t}\Big), \\
	{\lambda }_{3}& ={c}_{2}\cos \Big(\int\limits_{1}^{
		{x}}{\tau }({t})d{t}\Big)-{c}_{1}\sin \Big(\int\limits_{1}^{{x}}{\tau }({t})d{t}\Big).
	\end{align*}
	The rest is obvious.
\end{proof}
%\"{O}RNEK VER\.{I}LEB\.{I}L\.{I}R...
\begin{cor} 
		Let ${X}=
	{\lambda }_{1}{T}+{\lambda }_{2}{N}+
	{\lambda }_{3}{B}$ be any non-isotropic vector field along $
	{\alpha }$ and the parameters ${\lambda}_{i}$ are
	constants. The vector field ${X}$ is Fermi--Walker transported if and only if the curve $\alpha$ is the planar curve or $\lambda_2=\lambda_3=0.$
\end{cor}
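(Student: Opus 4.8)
The plan is to read the result straight off Theorem~\ref{T1}. First I would impose the hypothesis that the coefficients are constant. Substituting $\lambda_1,\lambda_2,\lambda_3 = \mathrm{const}$ into the system
\begin{align*}
\frac{d\lambda_1}{dx} &= 0, & \frac{d\lambda_2}{dx} - \tau\lambda_3 &= 0, & \frac{d\lambda_3}{dx} + \tau\lambda_2 &= 0
\end{align*}
that was derived in the proof of Theorem~\ref{T1} as the characterization of Fermi--Walker transport, the first equation is automatic and the remaining two collapse to the pointwise algebraic conditions $\tau(x)\lambda_3 = 0$ and $\tau(x)\lambda_2 = 0$ for every $x\in I$.

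Next I would establish the stated equivalence from these two relations. If $\lambda_2=\lambda_3=0$ they hold trivially, so $X=\lambda_1 T$ is Fermi--Walker transported; and if $\alpha$ is planar then $\tau\equiv 0$ (by the torsion formula in the Preliminaries, planarity of $\alpha$ means $\det(\alpha',\alpha'',\alpha''')\equiv 0$), so again both conditions hold whatever the values of the $\lambda_i$. This gives the "if" direction. For the converse, suppose $X$ is Fermi--Walker transported but $(\lambda_2,\lambda_3)\neq(0,0)$; then at least one of these nonzero constants, say $\lambda_2$, forces $\tau(x)=0$ for all $x$, hence $\alpha$ is planar.

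The hard part — really the only step needing care — is this last implication: a priori $\tau\lambda_2=\tau\lambda_3=0$ only says that at each parameter value \emph{either} $\tau$ vanishes \emph{or} both $\lambda_2$ and $\lambda_3$ vanish, and one must use the constancy of the $\lambda_i$ to promote this pointwise alternative to the global dichotomy "$\tau\equiv 0$ or $\lambda_2=\lambda_3=0$". I would also remark that, since the standing assumption throughout is $\kappa>0$, a line is not an admissible curve here, so "planar curve" is precisely the right companion to the case $\lambda_2=\lambda_3=0$ and no separate "not a line" hypothesis is required. Everything else is immediate from Theorem~\ref{T1}.
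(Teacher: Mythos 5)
Your proposal is correct and follows essentially the same route as the paper: the paper likewise substitutes constant $\lambda_i$ to get $\widetilde{\nabla}_{T}X=\tau(-\lambda_{3}N+\lambda_{2}B)$ and then declares the equivalence clear. Your extra care in using the constancy of the $\lambda_i$ to pass from the pointwise conditions $\tau\lambda_2=\tau\lambda_3=0$ to the global dichotomy is exactly the detail the paper leaves implicit, and it is handled correctly.
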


\begin{proof}
	Using  Theorem \ref{T1} and 
	$\forall {\lambda }_{i}=const.,$%
	\begin{equation*}
	\widetilde{\nabla }_{{T}}{X}={\tau }\left( -{\lambda }_{3}{N}+{
		\lambda }_{2}{{B}}\right)
	\end{equation*}%
	is obtained. Therefore, the proof is clear.
\end{proof}

%\begin{cor}
%		Let ${X}=
%	{\lambda }_{1}{T}+{\lambda }_{2}{N}+
%	{\lambda }_{3}{B}$ be any non-isotropic vector field along the curve $
%	{\alpha }$ in $G^3$ and the parameters ${\lambda}_{i}$ are
%	constants which is $\exists\lambda_{2}, \lambda_3 \neq 0$. Then, the vector field ${X}$ is Fermi--Walker transported along
%	the curve ${\alpha }$	 if and only if the curve ${\alpha}$ is a planar curve.
%\end{cor}

%\begin{ex}
%	Let ${\alpha}:I\subset \mathbb{R}\rightarrow\mathbb{G}^{3}$ be a curve in space $\mathbb{G}^{3}$ an $X=T+N+B$. The derivative Fermi-Walker of $X$ 
%	$$	\widetilde{\nabla }_{{T}}{X}={\tau }\left(B -{N}\right)$$ is obtained. Therefore,   transported along the curve if and only if ${\alpha}(x)$ is planar curve. Therefore $\alpha_{TNB}$ smarandache curve of $\alpha$ is Fermi-Walker transported.	
%\end{ex}

	%	Since $w^{\ast }=\widetilde{\tau }\widetilde{T}$ Fermi--Walker termed dual
	%	Darboux vector is Fermi-Walker parallel, $\widetilde{\tau }=const.$

\begin{thm}
	\label{T2}Let ${\alpha }$ be a curve in $\mathbb{G}^{3},
	{X}={\lambda }_{2}{N}+{\lambda }_{3}{B}$ be any non-zero isotropic vector field
	along ${\alpha}.$ The vector field ${X}$ is
	Fermi--Walker transported along the curve ${\alpha }$ if and only if
	\begin{align*}
	{\lambda }_{2}&\kappa = 0 \\
	{\lambda }_{2}& ={c}_{1}\cos\	\Big(\int\limits_{1}^{{x}}{\tau }({t})d{t}\Big)+{c}_{2}\sin \Big(\int\limits_{1}^{{x}}{\tau }(
	{t})d{t}\Big) \\
	{\lambda }_{3}& ={c}_{2}\cos\
	\Big(\int\limits_{1}^{{x}}{\tau}({t})d{t}\Big)-{c}_{1}\sin \Big(\int\limits_{1}^{{x}}{\tau }(
	{t})d{t}\Big)
	\end{align*}
	where ${c}_{1},{c}_{2}$ are constants of integration and ${\lambda }_{2}$,${\lambda }_{3}$ are
	continuously differentiable functions of arc length parameter $%
	{x}$.
\end{thm}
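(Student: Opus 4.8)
The plan is to argue exactly as in the proof of Theorem \ref{T1}, but invoking part (i) of Lemma \ref{L1} rather than part (ii), since $X=\lambda_2 N+\lambda_3 B$ is now isotropic. First I would differentiate $X$ along $\alpha$ using the Frenet--Serret formulae $N'=\tau B$ and $B'=-\tau N$, which gives
\[
\nabla_T X=(\lambda_2'-\tau\lambda_3)\,N+(\lambda_3'+\tau\lambda_2)\,B .
\]
Next I would compute $\langle N,X\rangle$: both $N$ and $X$ are isotropic, so the second branch of the scalar product \eqref{galpro} applies, and orthonormality of the Frenet frame gives $\langle N,N\rangle=1$, $\langle N,B\rangle=0$, hence $\langle N,X\rangle=\lambda_2$. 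Substituting into the formula of Lemma \ref{L1}(i) then yields
\[
\widetilde{\nabla}_T X=\kappa\lambda_2\,T+(\lambda_2'-\tau\lambda_3)\,N+(\lambda_3'+\tau\lambda_2)\,B .
\]

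Since $\{T,N,B\}$ is a frame along $\alpha$, the condition $\widetilde{\nabla}_T X=0$ is equivalent to the vanishing of each of the three coefficients, i.e. to
\[
\kappa\lambda_2=0,\qquad \lambda_2'-\tau\lambda_3=0,\qquad \lambda_3'+\tau\lambda_2=0 .
\]
The last two equations constitute the same skew linear system already encountered in Theorem \ref{T1}; solving it (for instance by forming the complex combination $\lambda_2+i\lambda_3$ and applying the integrating factor $\int_1^x\tau(t)\,dt$) produces the stated closed forms for $\lambda_2$ and $\lambda_3$ with integration constants $c_1,c_2$. Conversely, substituting those expressions back annihilates the $N$- and $B$-components, leaving the single remaining requirement $\kappa\lambda_2=0$, which is precisely the first listed condition; this gives the equivalence in both directions.

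The computation itself is routine, so there is no genuine obstacle; the one point I would highlight is the internal consistency of the three conditions. Because $\kappa>0$ by hypothesis, $\kappa\lambda_2=0$ forces $\lambda_2\equiv0$, and then $\lambda_2'-\tau\lambda_3=0$ forces $\tau\lambda_3\equiv0$; as $X$ is a non-zero isotropic field, $\lambda_3$ cannot vanish identically, so one is driven to $\tau\equiv0$. I would note this as the natural passage to the accompanying corollary (the curve being planar, or $X$ reducing to the trivial case), while for the theorem as stated it is enough to record the three equivalent conditions displayed above.
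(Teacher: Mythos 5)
Your proposal is correct and follows exactly the route of the paper's own proof: apply Lemma \ref{L1}(i), expand $\widetilde{\nabla}_T X$ in the Frenet frame to get the coefficients $\kappa\lambda_2$, $\lambda_2'-\tau\lambda_3$, $\lambda_3'+\tau\lambda_2$, and solve the resulting system. Your closing remark on the internal consistency of the three conditions (that $\kappa>0$ forces $\lambda_2\equiv 0$ and hence $\tau\lambda_3\equiv 0$) is a useful observation the paper defers to the subsequent corollary, but it does not change the argument.
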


\begin{proof}
	By the lemma \ref{L1} i), 
	\begin{equation*}
	\widetilde{\nabla}_{{T}}{X}=\Big({\lambda }_{2}{\kappa}\Big){T}+\Big(\frac{d{\lambda }_{2}}{d{x
	}}-{\tau }{\lambda }_{3}\Big){N}+\Big(\frac{d
		{\lambda }_{3}}{d{x}}+{\tau }{
		\lambda }_{2}\Big){B}
	\end{equation*}
	is obtained. ${X}$ is Fermi--Walker transported along the curve iff 
	\begin{align*}
	{\lambda }_{2}{\kappa}& =0, \\
	\frac{d{\lambda }_{2}}{d{x}}-{\tau }{
		\lambda }_{3}& =0, \\
	\frac{d{\lambda }_{3}}{d{x}}+{\tau }{\lambda }_{2}& =0.
	\end{align*}
	This is equivalent to 
	\begin{align*}
	{\lambda }_{2}&\kappa =0, \\
	{\lambda }_{2}& ={c}_{1}\cos \Big(\int\limits_{1}^{
		{x}}{\tau }({t})d{t}\Big)+{c}
	_{2}\sin \Big(\int\limits_{1}^{{x}}{\tau }({t})d{t}\Big), \\
	{\lambda }_{3}& ={c}_{2}\cos \Big(\int\limits_{1}^{
		{t}}{\tau }({t})d{t}\Big)-{c}_{1}\sin \Big(\int\limits_{1}^{{x}}{\tau }({t})d{t}\Big).
	\end{align*}
	The rest is obvious.
\end{proof}

\begin{cor} 
	Let ${\alpha }$ be a curve in $\mathbb{G}^{3}$ and
	${X}={\lambda }_{2}{N}+{\lambda }_{3}{B}$ be any non-zero isotropic vector field
	along ${\alpha}.$ 
	\item[i)] If ${\alpha }$ is a line in Galilean space, then the vector field $X$ is Fermi-Walker transported.
	
	\item[ii)] If ${\alpha }$ is a planar curve which is not a line, and ${\lambda}_{2}=0$ then the vector field $X$ is Fermi-Walker transported.
\end{cor}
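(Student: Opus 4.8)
The plan is to deduce both parts directly from Theorem \ref{T2}. That theorem characterizes Fermi--Walker transport of a non-zero isotropic field $X=\lambda_2 N+\lambda_3 B$ along $\alpha$ by the three conditions $\lambda_2\kappa\equiv 0$ together with the rotation solution $\lambda_2=c_1\cos\theta+c_2\sin\theta$, $\lambda_3=c_2\cos\theta-c_1\sin\theta$, where $\theta(x)=\int_{1}^{x}\tau(t)\,dt$; equivalently, from the computation in its proof, transport is exactly the vanishing of $\widetilde{\nabla}_{T}X=(\lambda_2\kappa)T+(\lambda_2'-\tau\lambda_3)N+(\lambda_3'+\tau\lambda_2)B$. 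So first I would record what the hypotheses on $\alpha$ impose on $\kappa$ and $\tau$, and then each part becomes a substitution into this identity.

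For (i), a line has $\alpha''\equiv 0$, hence $\kappa\equiv 0$, and being contained in a plane it also has $\tau\equiv 0$; here $N,B$ must be read as a fixed orthonormal completion of the constant tangent $T$, since the curvature/torsion formulas for the Frenet frame degenerate. Every coefficient in the expression for $\widetilde{\nabla}_{T}X$ then vanishes: $\lambda_2\kappa=0$ trivially, and $\lambda_2'=\lambda_3'=0$ for the (constant) components of such an $X$, so $X$ is Fermi--Walker transported. For (ii), a planar curve that is not a line has $\tau\equiv 0$ but $\kappa\not\equiv 0$; hence $\theta\equiv 0$, the formulas of Theorem \ref{T2} collapse to $\lambda_2\equiv c_1$ and $\lambda_3\equiv c_2$, and the hypothesis $\lambda_2=0$ forces $c_1=0$. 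The obstruction $\lambda_2\kappa$ then vanishes while the remaining two conditions already hold, so $X=\lambda_3 B$ is Fermi--Walker transported.

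The only point requiring genuine care is case (i): because $\kappa\equiv 0$ destroys the usual definitions of the principal normal and binormal, one must state explicitly that ``$X=\lambda_2 N+\lambda_3 B$'' there means $X$ lies in the constant orthogonal complement of $T$ with $\lambda_2,\lambda_3$ constant; with that convention the verification is routine. A clean alternative, which sidesteps any appeal to Theorem \ref{T2}, is to compute $\widetilde{\nabla}_{T}X$ straight from Lemma \ref{L1}(i): for a line $A=\nabla_{T}T=0$ so the Fermi--Walker derivative reduces to $\nabla_{T}X$, and for a planar curve $\tau=0$, so in each case the three scalar conditions are immediate.
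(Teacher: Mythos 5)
Your proposal is correct and follows the same route the paper intends: the corollary is stated as an immediate consequence of Theorem \ref{T2} (equivalently of the expression for $\widetilde{\nabla}_{T}X$ from Lemma \ref{L1}(i)), and the paper in fact gives no separate proof. Your added care in case (i) — noting that $\kappa\equiv 0$ degenerates the Frenet frame, so $N,B$ must be read as a fixed orthonormal completion of $T$ and the components taken constant — is a worthwhile precision the paper glosses over, since for non-constant $\lambda_2,\lambda_3$ the statement would fail.
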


%\begin{proof}
%	Using lemma \ref{L1} 
%	\begin{equation*}
%	\widetilde{\nabla }_{{T}}{X}=\nabla _{{T}}
%	{X}+{\kappa }\langle N, X\rangle {T}
%	\end{equation*}%
%	for $ {\lambda }_{2}=0$ and ${\lambda }_{3}=const.,$
%	\begin{equation*}
%	\widetilde{\nabla }_{{T}}{X}=-{\tau }{\lambda }_{3}{N}
%	\end{equation*}%
%	is obtained. Since ${\alpha }$ is a planar curve we get 
%	\begin{equation*}
%	\widetilde{\nabla }_{{T}}{X}=0
%	\end{equation*}%
%	which gives the result.
%\end{proof}

\begin{cor}
	Let $\{{T},{N},{B}\}$ be the Frenet frame
	of ${\alpha }$. The $\{{T},{N},{B}\}$ is a non-rotating frame along the curve if and only if the curve is a line. Otherwise, the Frenet frame is not a non-rotating frame along the curve in Galilean space.
\end{cor}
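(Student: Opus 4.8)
The plan is to unwind the definition of a non-rotating frame and reduce everything to Lemma \ref{L1}. By definition, $\{T,N,B\}$ is a non-rotating frame along $\alpha$ precisely when $\widetilde{\nabla}_{T}T=\widetilde{\nabla}_{T}N=\widetilde{\nabla}_{T}B=0$, so I would compute these three Fermi--Walker derivatives one at a time. The key preliminary observation is the isotropy classification of the Frenet vectors: since $\alpha''=(0,y'',z'')$, the tangent $T=(1,y',z')$ is non-isotropic while $N=\alpha''/\Vert\alpha''\Vert_{G}$ and $B=\tfrac1\kappa(0,-z'',y'')$ are isotropic. Hence part (ii) of Lemma \ref{L1} applies to $T$ and part (i) applies to $N$ and $B$.

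Carrying this out: for $T$, Lemma \ref{L1}(ii) together with $\langle T,T\rangle=1$ and $\nabla_{T}T=\kappa N$ gives $\widetilde{\nabla}_{T}T=\kappa N-\kappa N=0$, so the tangent is always Fermi--Walker transported regardless of $\alpha$. For $N$, Lemma \ref{L1}(i) with $\langle N,N\rangle=1$ and $N'=\tau B$ gives $\widetilde{\nabla}_{T}N=\tau B+\kappa T$. For $B$, Lemma \ref{L1}(i) with $\langle N,B\rangle=0$ and $B'=-\tau N$ gives $\widetilde{\nabla}_{T}B=-\tau N$.

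Now I would assemble the equivalence. Since $\{T,N,B\}$ is a basis of the tangent space at each point, $\widetilde{\nabla}_{T}N=\kappa T+\tau B=0$ and $\widetilde{\nabla}_{T}B=-\tau N=0$ hold simultaneously if and only if $\kappa(x)=0$ and $\tau(x)=0$ for all $x$; and $\kappa\equiv 0$ is exactly the condition that $\alpha$ be a line. Conversely, if $\alpha$ is a line then $\kappa\equiv 0$ (and $\tau\equiv 0$), so all three of the expressions above vanish and the frame is non-rotating. This establishes the stated equivalence, and the "otherwise" clause is just its contrapositive: if $\alpha$ is not a line then $\kappa\neq 0$ somewhere, so $\widetilde{\nabla}_{T}N\neq 0$ there.

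The one point that needs care is the degenerate/line case: the Frenet frame in the Preliminaries is built under the standing assumption $\kappa>0$, so when $\alpha$ is a line one must either read $\{T,N,B\}$ as a constant orthonormal completion of the (constant) tangent $T$ with the convention $\kappa=\tau=0$, or else phrase the conclusion as "the Frenet frame, when it exists, is never non-rotating unless the curve degenerates to a line." Either reading is fine; the substantive content is the computation above, which shows that the non-rotating property of $\{T,N,B\}$ is obstructed exactly by the nonvanishing of $\kappa$ (and, jointly, $\tau$).
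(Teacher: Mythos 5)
Your computation is correct and matches what the paper intends: the corollary is stated without proof, but it follows by specializing the preceding transport theorems (equivalently, Lemma \ref{L1}) to $X=T,N,B$, yielding $\widetilde{\nabla}_{T}T=0$, $\widetilde{\nabla}_{T}N=\kappa T+\tau B$, $\widetilde{\nabla}_{T}B=-\tau N$, so the frame is non-rotating iff $\kappa\equiv 0$ (and hence $\tau\equiv 0$), i.e.\ iff $\alpha$ is a line. Your remark about the degeneracy of the Frenet frame when $\kappa=0$ is a fair caveat but does not change the substance.
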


%\begin{defn}
%	Let $\tilde{\alpha}:I\subset \mathbb{R}\rightarrow\mathbb{G}^{3}$ be a curve with Frenet
%	frame $\{{T},{N},{B}\}$. The Fermi--Walker
%	termed Darboux vector is defined by 
%	\begin{equation*}
%	w^{\ast }(s)={\tau }(x){T}(x)
%	\end{equation*}%
%	and where the torsion $\tilde{\tau}$ is never pure- and $\widetilde{%
%		\nabla }_{\widetilde{T}}\widetilde{T}=w^{\ast }\wedge \widetilde{T},$ $%
%	\widetilde{\nabla }_{\widetilde{T}}\widetilde{N}=w^{\ast }\wedge \widetilde{N%
%	},$ $\widetilde{\nabla }_{\widetilde{T}}\tilde{B}=w^{\ast }\wedge \tilde{B}$
%	are provided.
%\end{defn}

%
%
%
%
%
%Afterwards, insert title, author(s) and affiliation(s), as in the source file to this document,
%\verb+bjourdoc.tex+. E.g.,
%\begin{verbatim}
%\title[An Example for birkjour]
% {An Example for the Usage of the\\ birkjour Class File}
%%----------Author 1
%\author[Birkh\"auser]{Birkh\"{a}user Publishing Ltd.}
%\address{%
%Viaduktstr. 42\\
%P.O. Box 133\\
%CH 4010 Basel\\
%Switzerland}
%\email{info@birkhauser.ch}
%\end{verbatim}
%For each author the commands \verb+\author+, \verb+\address+ and \verb+\email+ should be used separately. See the last page of this document for the typesetting layout of the above addresses.
\section{Darboux Frame and Fermi--Walker Derivative in Galilean Space}
Frame fields constitute a very useful tool for studying curves and surfaces. However, the Frenet frame ${T, N, B}$ of $\alpha$ is not useful to describe the geometry of surface $M$. Since $N$ and $B$ in general will be neither tangent nor perpendicular to M. Therefore, we require another frame of $\alpha$ for study the relation between the geometry of $\alpha$ and $M$. There is such a frame field that is called Darboux frame field of $\alpha$ with respect to $M$. The Darboux frame field consists of the triple of vector fields ${T, Q, n}$. The first and last vector fields of this frame $T$ and $n$ are a unit tangent vector field of $\alpha$ and unit normal vector field of $M$ at the point $\alpha(x)$ of $\alpha$. Let $Q=n\times_{G}T$ be the tangential-normal. 

% For alignments use AmS-LaTeX constructions not \eqnarray.

%% - theorems and proofs
\begin{thm}Let  $\alpha :I\subset \mathbb{R}\rightarrow M\subset \mathbb{G}^{3}$ be a unit-speed curve, and let {T, Q, n} be the Darboux frame field of $\alpha$ with respect to M. Then
	\begin{equation}\label{Darboux}
	\begin{bmatrix}
	T \\ 
	Q \\ 
	n
	\end{bmatrix}
	^{\prime }=
	\begin{bmatrix}
	0 & \kappa_g & \kappa_n \\ 
	0 & 0 & \tau_g \\ 
	0 & -\tau_g & 0
	\end{bmatrix}
	\begin{bmatrix}
	T \\ 
	Q \\ 
	n
	\end{bmatrix}\,.
	\end{equation}
	where $\kappa_g$ and $\kappa_n$ give the tangential and normal component of the curvature vector, and these functions are called the geodesic and the normal curvature, respectively \cite{sahin}.
	% The optional material will be typeset as part of the theorem heading
\end{thm}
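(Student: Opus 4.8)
\emph{Proof proposal.} The plan is to work in the standard coordinates $\alpha(x)=(x,y(x),z(x))$ and to exploit the special structure of the Galilean scalar product \eqref{galpro}. First I would record that, since $T=\alpha'=(1,y',z')$ has non‑vanishing first coordinate, $\langle T,T\rangle =1$ and any vector Galilean‑orthogonal to $T$ is forced to have zero first coordinate; in particular the unit normal $n$ of $M$ must be isotropic, say $n=(0,n_2,n_3)$ with $n_2^2+n_3^2=1$, and then a direct evaluation of the Galilean cross product gives $Q=n\times_G T=(0,n_3,-n_2)$, which is again isotropic. A short check yields $\langle T,Q\rangle=\langle T,n\rangle=\langle Q,n\rangle=0$ and $\langle Q,Q\rangle=\langle n,n\rangle=1$, so $\{T,Q,n\}$ is orthonormal in the Galilean sense, and on the plane of isotropic vectors the product \eqref{galpro} reduces to the ordinary Euclidean product in the last two coordinates.

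Next I would observe that differentiating $T$, $Q$, $n$ with respect to $x$ leaves the first coordinate equal to $0$ (it is constantly $1$ for $T$ and constantly $0$ for $Q$ and $n$), so $T'$, $Q'$, $n'$ are all isotropic. Hence each of them is Galilean‑orthogonal to $T$, which already forces the entire first column of the transition matrix to vanish and reduces the problem to expanding $T'$, $Q'$, $n'$ in the isotropic subframe $\{Q,n\}$ using the now genuinely bilinear Euclidean product on the last two coordinates.

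Then I would carry out the three expansions. For $T'=\alpha''=(0,y'',z'')$, set $\kappa_g:=\langle T',Q\rangle$ and $\kappa_n:=\langle T',n\rangle$; orthonormality of $\{Q,n\}$ on isotropic vectors gives $T'=\kappa_g Q+\kappa_n n$, the first row of \eqref{Darboux}, and $\kappa_g$, $\kappa_n$ are by construction the tangential and normal components of the curvature vector $\alpha''$. For the remaining rows, differentiate the constant relations $\langle n,n\rangle=1$, $\langle Q,Q\rangle=1$, $\langle Q,n\rangle=0$ along $\alpha$ to obtain $\langle n',n\rangle=0$, $\langle Q',Q\rangle=0$, and $\langle Q',n\rangle+\langle Q,n'\rangle=0$. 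The first two say $n'$ is a multiple of $Q$ and $Q'$ a multiple of $n$; writing $\tau_g:=\langle Q',n\rangle$, the third relation forces $\langle n',Q\rangle=-\tau_g$, so $Q'=\tau_g n$ and $n'=-\tau_g Q$, which are exactly rows two and three of \eqref{Darboux}.

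The only genuinely delicate point — and the place one must argue rather than compute — is that the Galilean scalar product is not bilinear across the isotropic/non‑isotropic divide, so the usual Frenet‑type manipulation (product rule on $\langle\cdot,\cdot\rangle$, reading off components from orthonormality) is legitimate only after one has verified that $T'$, $Q'$, $n'$ all lie in the isotropic plane, where \eqref{galpro} degenerates to the Euclidean inner product in $(v_2,v_3)$. Once that isotropy observation is in place, the rest is a routine $2\times 2$ computation, so I would state the isotropy step carefully and then let the classical argument run.
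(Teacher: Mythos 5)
Your proof is correct and follows essentially the same route as the paper's: expand $T'$, $Q'$, $n'$ in the frame $\{T,Q,n\}$ and read off the coefficients from orthonormality, the paper carrying this out explicitly only for $T'$ and dismissing the remaining rows as similar. The additional care you take in verifying that $Q$, $n$ and all three derivatives are isotropic --- so that the degenerate Galilean product is genuinely bilinear where the product rule is invoked --- is precisely the justification the paper leaves implicit, and it is worth stating as you do.
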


\begin{proof}	We have 
	\begin{equation}\label{c}
	\begin{split}
	T'&= (T'\cdot_{G}Q)Q+(T'\cdot_{G}n)n
	\\&= (\alpha''\cdot_{G}Q)Q+(\alpha''\cdot_{G}n)n
	\\&=\kappa_g Q + \kappa_n n.
	\end{split}
	\end{equation}
	The other formulae are proved in a similar fashion.
	% the proof
\end{proof}

Also, (2.7) implies the important relations
\begin{equation}\label{kt}
\kappa^2(x)=\kappa^2_g(x)+\kappa^2_n(x),  \hskip .5cm \tau(x)=-\tau_g(x)+\frac{\kappa'_g(x)\kappa_n(x)-\kappa_g(x)\kappa'_n(x)}{\kappa^2_g(x)+\kappa^2_n(x)}
\end{equation} 
where $\kappa^2(x)$ and $\tau(x)$ are the square curvature and the torsion of $\alpha$, respectively.
\begin{lem}
	\label{L2}Let ${\alpha}:I\subset \mathbb{R}\rightarrow\mathbb{G}^{3}$ be a curve in $\mathbb{G}^{3}$ and ${X}$ is any vector field along the curve, Fermi-Walker derivative with respect to the Darboux frame can be expressed as
	\begin{enumerate}
		\item[i)] If $X$ is an isotropic vector field along $
		{\alpha}$, then Fermi-Walker derivative with respect to the Darboux frame of $X$ is given by 
		\begin{equation*}
		\widetilde{\nabla }_{{T}}{X}=\nabla _{{T}}{X}+\big(\kappa_g{\langle Q, X\rangle}+\kappa_n{\langle n, X\rangle}\big)T.
		\end{equation*}
		\item[ii)] If $X$ is a non-isotropic vector field along $
		{\alpha}$ in $\mathbb{G}^{3}$, then Fermi-Walker derivative with respect to the Darboux frame of $X$ is given by 
		\begin{equation*}
		\widetilde{\nabla }_{{T}}{X}=\nabla _{{T}}{X}-\big(\kappa_g Q+\kappa_n n\big){\langle T, X\rangle}.
		\end{equation*}
	\end{enumerate}
\end{lem}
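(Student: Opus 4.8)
The plan is to reduce both identities to Definition~\ref{fermi dfn} by first rewriting the acceleration vector $A=\nabla_{T}T$ in terms of the Darboux frame. From the first row of \eqref{Darboux} (equivalently from \eqref{c}) we have $T'=\kappa_{g}Q+\kappa_{n}n$, hence $A=\kappa_{g}Q+\kappa_{n}n$. Substituting this into \eqref{fermi drv} gives
\[
\widetilde{\nabla}_{T}X=\nabla_{T}X-\langle T,X\rangle\,(\kappa_{g}Q+\kappa_{n}n)+\langle \kappa_{g}Q+\kappa_{n}n,\,X\rangle\,T ,
\]
and from here the argument is purely a matter of evaluating the two Galilean scalar products according to \eqref{galpro}, treating the isotropic and non-isotropic cases for $X$ separately, exactly as in the proof of Lemma~\ref{L1} but with $\kappa N$ replaced by $\kappa_{g}Q+\kappa_{n}n$.

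For part (i), assume $X$ is isotropic. Since $\alpha(x)=(x,y(x),z(x))$, the tangent $T=\alpha'$ has first coordinate $1$, so it is non-isotropic, while $A=T'=(0,y'',z'')$ is isotropic. Because the first coordinate of $T$ is nonzero and that of $X$ is zero, the first line of \eqref{galpro} gives $\langle T,X\rangle=0$, so the middle term above drops out. The remaining product $\langle A,X\rangle$ is computed by the second line of \eqref{galpro}, which is bilinear on isotropic vectors; thus $\langle A,X\rangle=\kappa_{g}\langle Q,X\rangle+\kappa_{n}\langle n,X\rangle$, yielding
$\widetilde{\nabla}_{T}X=\nabla_{T}X+\bigl(\kappa_{g}\langle Q,X\rangle+\kappa_{n}\langle n,X\rangle\bigr)T$ as claimed.

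For part (ii), assume $X$ is non-isotropic. Now the roles are interchanged: $A=\kappa_{g}Q+\kappa_{n}n$ is isotropic while $X$ has nonzero first coordinate, so the first line of \eqref{galpro} forces $\langle A,X\rangle=0$ and the last term vanishes; the middle term survives as $-\langle T,X\rangle(\kappa_{g}Q+\kappa_{n}n)$, which is precisely the asserted formula. The only point requiring any care is the bookkeeping of which branch of the degenerate scalar product \eqref{galpro} applies in each inner product, and that is the entire (very mild) obstacle: there is no analytic content here beyond the substitution $A=\kappa_{g}Q+\kappa_{n}n$ and the degeneracy of the Galilean metric.
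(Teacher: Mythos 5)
Your proposal is correct and follows exactly the route the paper intends: the paper's own proof is a one-line appeal to Definition~\ref{fermi dfn} and the scalar product \eqref{galpro}, and you have simply written out that substitution $A=\kappa_g Q+\kappa_n n$ together with the case analysis of which branch of the degenerate product applies. Your version is in fact more explicit than the paper's, but it is the same argument.
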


\begin{proof}
	Using definition \ref{fermi dfn} and the equation \eqref{galpro}, the above equations are obtained.
	%	\begin{equation*}
	%	\widetilde{\nabla }_{\widetilde{T}}\widetilde{X}=\nabla _{\widetilde{T}}%
	%	\widetilde{X}-\left\langle \widetilde{T},\widetilde{X}\right\rangle \nabla _{%
	%		\widetilde{T}}\widetilde{T}+\left\langle \nabla _{\widetilde{T}}\widetilde{T}%
	%	,\widetilde{X}\right\rangle \widetilde{T}
	%	\end{equation*}%
	%	by using features of vector product in Galilean space,%
	%	\begin{equation*}
	%	\widetilde{\nabla }_{\widetilde{T}}\widetilde{X}=\nabla _{\widetilde{T}}%
	%	\widetilde{X}+\widetilde{\kappa }(\widetilde{X}\wedge \widetilde{B})
	%	\end{equation*}%
	%	which gives the result.
\end{proof}
\begin{cor}
	Let ${X}=\lambda_2 Q+\lambda_3 n $ be an isotropic vector field along the curve ${\alpha}$. The Fermi-Walker derivative coincides with derivative of $X$ iff $\lambda_2 \kappa_g+\lambda_3 \kappa_n=0$. 
\end{cor}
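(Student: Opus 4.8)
The plan is to apply Lemma~\ref{L2}(i) directly to the given isotropic vector field and read off the condition. First I would write $X=\lambda_2 Q+\lambda_3 n$, noting that this is indeed isotropic since neither $Q$ nor $n$ has a nonzero first component (both being orthogonal to the non-isotropic tangent $T$). Then I would compute $\nabla_T X$ using the Darboux equations \eqref{Darboux}, obtaining
\begin{equation*}
\nabla_T X=\Big(\lambda_2\kappa_g+\lambda_3\kappa_n\Big)T+\Big(\frac{d\lambda_2}{dx}-\tau_g\lambda_3\Big)Q+\Big(\frac{d\lambda_3}{dx}+\tau_g\lambda_2\Big)n,
\end{equation*}
where the $T$-component arises from $Q'$ and $n'$ each carrying no $T$-term but $T'=\kappa_g Q+\kappa_n n$ playing no role here; rather the $T$-component comes from differentiating and is absent, so in fact the only $T$-contribution is zero and I must be careful: $Q'=-\tau_g n$ and $n'=\tau_g Q$ carry no $T$, so $\nabla_T X$ has vanishing $T$-component. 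Hence the computation actually gives $\nabla_T X=\big(\lambda_2'-\tau_g\lambda_3\big)Q+\big(\lambda_3'+\tau_g\lambda_2\big)n$.

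Next I would substitute into the formula from Lemma~\ref{L2}(i). Since $\langle Q,X\rangle=\lambda_2$ and $\langle n,X\rangle=\lambda_3$ (using the second branch of \eqref{galpro}, as $Q$, $n$, $X$ are all isotropic), the Fermi-Walker derivative becomes
\begin{equation*}
\widetilde{\nabla}_T X=\big(\kappa_g\lambda_2+\kappa_n\lambda_3\big)T+\big(\lambda_2'-\tau_g\lambda_3\big)Q+\big(\lambda_3'+\tau_g\lambda_2\big)n.
\end{equation*}
Comparing this with $\nabla_T X$, which has no $T$-component, I see that $\widetilde{\nabla}_T X=\nabla_T X$ holds if and only if the coefficient $\kappa_g\lambda_2+\kappa_n\lambda_3$ vanishes. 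That is precisely the claimed condition $\lambda_2\kappa_g+\lambda_3\kappa_n=0$, which completes the argument.

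There is essentially no obstacle here: the statement is an immediate corollary of Lemma~\ref{L2}(i), and the only point requiring a moment's care is confirming that $X$, $Q$, and $n$ are genuinely isotropic so that the second branch of the Galilean scalar product \eqref{galpro} applies and the inner products reduce to the coefficients $\lambda_2$ and $\lambda_3$. Once the extra $T$-term in the Fermi-Walker formula is isolated, the equivalence is visible by inspection of coefficients in the Darboux frame.
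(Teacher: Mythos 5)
Your proof is correct and is exactly the intended argument: the corollary is immediate from Lemma \ref{L2}(i), since $\widetilde{\nabla}_{T}X-\nabla_{T}X=\bigl(\kappa_g\langle Q,X\rangle+\kappa_n\langle n,X\rangle\bigr)T=(\lambda_2\kappa_g+\lambda_3\kappa_n)T$, which vanishes iff $\lambda_2\kappa_g+\lambda_3\kappa_n=0$. The explicit computation of $\nabla_T X$ is not actually needed for the conclusion (and note the Darboux equations give $Q'=\tau_g n$ and $n'=-\tau_g Q$, not the signs you state in passing, though your final expression for $\nabla_T X$ is the correct one).
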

\begin{cor}
	Let ${X}$ be a non-isotropic vector field along the curve ${\alpha}$. The Fermi--Walker derivative coincides with derivative of ${X}$ iff $\alpha$ is a line in $\mathbb{G}^{3}$.
\end{cor}
\begin{proof}
	Using lemma \ref{L2}(ii) and $X=\lambda_1T+\lambda_2 Q+\lambda_3 n$,  
	$$\widetilde{\nabla }_{{T}}{X}=\nabla _{{T}}{X}-\lambda_1(\kappa_g Q+\kappa_n n)\notag$$ is obtained. Since $X$ is a non-isotropic, $\lambda_1\neq 0$. Therefore,  $\widetilde{\nabla }_{{T}}{X}=\nabla _{{T}}{X}$ iff $\kappa_g Q+\kappa_n n=0.$ Hence, $\kappa_g=\kappa_n=0$. That is, the curve is a line in $\mathbb{G}^{3}.$ 
\end{proof}

\begin{thm}\label{T2}
	Let ${\alpha }$ be a curve in $\mathbb{G}^{3},
	{X}={\lambda }_{1}{T}+{\lambda }_{2}{N}+{\lambda }_{3}{B}$ be any non-isotropic vector field
	along ${\alpha}.$ The vector field ${X}$ is
	Fermi--Walker transported along the curve ${\alpha }$ if and only if
	\begin{align*}
	{\lambda }_{1}({x})& =\text{const,} \\
	{\lambda }_{2}({x})& ={c}_{1}\cos\	\Big(\int\limits_{1}^{{x}}{\tau_g }({t})d{t}\Big)+{c}_{2}\sin \Big(\int\limits_{1}^{{x}}{\tau_g }(
	{t})d{t}\Big) \\
	{\lambda }_{3}({x})& ={c}_{2}\cos\
	\Big(\int\limits_{1}^{{x}}{\tau_g}({t})d{t}\Big)-{c}_{1}\sin \Big(\int\limits_{1}^{{x}}{\tau_g }(
	{t})d{t}\Big)
	\end{align*}
	where ${c}_{1},{c}_{2}$ are constants of integration and ${
		\lambda }_{1}$,${\lambda }_{2}$,${\lambda }_{3}$ are
	continuously differentiable functions of arc length parameter $%
	{x}$.
\end{thm}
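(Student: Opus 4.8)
The plan is to follow verbatim the strategy of Theorem~\ref{T1}, with the Darboux apparatus $\{T,Q,n\}$ and its structure equations~\eqref{Darboux} replacing the Frenet apparatus, and with Lemma~\ref{L2}(ii) used in place of Lemma~\ref{L1}(ii); the coefficient $\tau_g$ in the conclusion appears precisely because the Darboux equations carry $\tau_g$ where the Frenet equations carried $\tau$. First I would record that, by the definition~\eqref{galpro} of the Galilean scalar product, $\langle T,X\rangle=\lambda_1$ for any $X=\lambda_1 T+\lambda_2 Q+\lambda_3 n$: the tangent $T=\alpha'=(1,y',z')$ is non-isotropic with leading coordinate $1$, while $Q$ and $n$ are isotropic, so only the $T$-component of $X$ contributes to the product.

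Next I would differentiate $X$ along $\alpha$ using~\eqref{Darboux}; a short computation gives
\[
\nabla_T X=\lambda_1' T+(\lambda_1\kappa_g+\lambda_2'-\lambda_3\tau_g)Q+(\lambda_1\kappa_n+\lambda_2\tau_g+\lambda_3')n.
\]
Substituting this together with $\langle T,X\rangle=\lambda_1$ into Lemma~\ref{L2}(ii), the terms $\lambda_1\kappa_g Q$ and $\lambda_1\kappa_n n$ cancel, leaving
\[
\widetilde{\nabla}_T X=\lambda_1' T+(\lambda_2'-\lambda_3\tau_g)Q+(\lambda_3'+\lambda_2\tau_g)n.
\]
Hence $\widetilde{\nabla}_T X=0$ is equivalent to the first-order system $\lambda_1'=0$, $\lambda_2'=\tau_g\lambda_3$, $\lambda_3'=-\tau_g\lambda_2$.

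Finally I would solve this system. The first equation gives $\lambda_1=\mathrm{const}$. For the remaining pair, putting $\theta(x)=\int_1^x\tau_g(t)\,dt$ one checks directly that the stated functions $\lambda_2=c_1\cos\theta+c_2\sin\theta$, $\lambda_3=c_2\cos\theta-c_1\sin\theta$ solve it — e.g. by noting that $\lambda_2^2+\lambda_3^2$ is constant and that $(\lambda_2,\lambda_3)$ rotates with angular speed $\theta'=\tau_g$, so that the coordinates relative to the rotating orthonormal pair are the constants $c_1,c_2$. This reproduces the formulas in the statement, and the converse is immediate by substitution. I do not expect any genuine obstacle: the only point requiring attention is the Galilean inner-product bookkeeping in the first step, which forces $\langle T,X\rangle=\lambda_1$ and thereby the clean cancellation of the $\kappa_g$ and $\kappa_n$ terms; after that it is the same linear ODE already treated in Theorem~\ref{T1}, now with $\tau$ replaced by the geodesic torsion $\tau_g$.
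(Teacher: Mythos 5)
Your proposal is correct and is exactly the argument the paper intends: the paper's proof of this theorem simply says "by Lemma \ref{L2}(ii) the proof is obvious," deferring to the identical computation carried out in full for the Frenet-frame version (Theorem \ref{T1}), and your expansion — $\langle T,X\rangle=\lambda_1$, cancellation of the $\lambda_1\kappa_g Q$ and $\lambda_1\kappa_n n$ terms, and solution of the resulting rotation system with angle $\int_1^x\tau_g\,dt$ — supplies precisely those omitted details. You also correctly read the statement's $N,B$ as the Darboux vectors $Q,n$, which is evidently a typo in the paper.
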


\begin{proof}
	By the lemma \ref{L2} ii), 
%	\begin{equation*}
%	\widetilde{\nabla}_{{T}}{X}=\Big(\frac{d{\lambda }_{1}}{
%		d{x}}\Big){T}+\Big(\frac{d{\lambda }_{2}}{d{x
%	}}-{\tau }{\lambda }_{3}\Big){N}+\Big(\frac{d
%		{\lambda }_{3}}{d{x}}+{\tau }{
%		\lambda }_{2}\Big){B}
%	\end{equation*}
%	is obtained. ${X}$ is Fermi--Walker parallel along the curve iff 
%	\begin{align*}
%	\frac{d{\lambda }_{1}}{d{x}}& =0, \\
%	\frac{d{\lambda }_{2}}{d{x}}-{\tau }{
%		\lambda }_{3}& =0, \\
%	\frac{d{\lambda }_{3}}{d{x}}+{\tau }{\lambda }_{2}& =0.
%	\end{align*}
%	This is equivalent to 
%	\begin{align*}
%	{\lambda }_{1}& =const., \\
%	{\lambda }_{2}& ={c}_{1}\cos \Big(\int\limits_{1}^{
%		{x}}{\tau }({t})d{t}\Big)+{c}
%	_{2}\sin \Big(\int\limits_{1}^{{x}}{\tau }({t})d{t}\Big), \\
%	{\lambda }_{3}& ={c}_{2}\cos \Big(\int\limits_{1}^{
%		{x}}{\tau }({t})d{t}\Big)-{c}_{1}\sin \Big(\int\limits_{1}^{{x}}{\tau }({t})d{t}\Big).
%	\end{align*}
	the proof is obvious.
\end{proof}
%\"{O}RNEK VER\.{I}LEB\.{I}L\.{I}R...
\begin{cor}
	Let ${X}=
	{\lambda }_{1}{T}+{\lambda }_{2}{N}+
	{\lambda }_{3}{B}$ be any non-isotropic vector field along $
	{\alpha }$ and the parameters ${\lambda}_{i}$ are
	constants. The vector field ${X}$ is Fermi--Walker transported iff the curve $\alpha$ is the line of curvature or $\lambda_2=\lambda_3=0.$
	
\end{cor}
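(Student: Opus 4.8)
The plan is to specialize Lemma~\ref{L2}(ii) to the case of constant coefficients, working throughout with the Darboux expansion $X=\lambda_1 T+\lambda_2 Q+\lambda_3 n$ appropriate to the present section. First I would note that, since $T$ is the only non-isotropic vector of the frame (its first coordinate equals $1$, whereas $Q$ and $n$ are isotropic), the defining rule \eqref{galpro} for the Galilean product gives $\langle T,X\rangle=\lambda_1$, and $\lambda_1\neq 0$ is exactly the hypothesis that $X$ is non-isotropic. Because the $\lambda_i$ are constants, the ordinary derivative of $X$ along $\alpha$ is read off directly from the Darboux equations \eqref{Darboux}: $\nabla_T X=\lambda_1(\kappa_g Q+\kappa_n n)+\lambda_2\tau_g n-\lambda_3\tau_g Q$.

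Substituting this together with $\langle T,X\rangle=\lambda_1$ into $\widetilde{\nabla}_T X=\nabla_T X-(\kappa_g Q+\kappa_n n)\langle T,X\rangle$, the terms $\lambda_1\kappa_g Q$ and $\lambda_1\kappa_n n$ cancel, leaving
\[
\widetilde{\nabla}_T X=\tau_g\,(\lambda_2 n-\lambda_3 Q).
\]
Since $Q$ and $n$ are linearly independent, this vanishes identically if and only if $\tau_g\equiv 0$ along $\alpha$ or $\lambda_2=\lambda_3=0$. The first alternative is precisely the statement that $\alpha$ is a line of curvature of $M$, because vanishing geodesic torsion characterizes lines of curvature; this yields the asserted equivalence in both directions. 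Alternatively, one can read the same dichotomy off Theorem~\ref{T2}: requiring the functions $\lambda_2$ and $\lambda_3$ produced there to be constant forces either the trigonometric arguments $\int_1^x\tau_g$ to be constant (i.e.\ $\tau_g\equiv 0$) or the integration constants $c_1,c_2$ to vanish.

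I do not anticipate a genuine obstacle in this corollary; it is essentially a one-line consequence of Lemma~\ref{L2}(ii) once the coefficients are frozen. The only points needing mild care are the Galilean scalar-product bookkeeping that produces $\langle T,X\rangle=\lambda_1$ and the appeal to the classical fact that $\tau_g\equiv 0$ characterizes lines of curvature; both are routine and exactly parallel the Frenet-frame corollary proved earlier, with $\tau$ and planar curves playing the roles of $\tau_g$ and lines of curvature.
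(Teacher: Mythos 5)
Your proposal is correct and follows essentially the same route as the paper: both specialize Lemma~\ref{L2}(ii) with constant coefficients to obtain $\widetilde{\nabla}_T X=\tau_g(\lambda_2 n-\lambda_3 Q)$ and then read off the dichotomy $\tau_g\equiv 0$ (line of curvature) or $\lambda_2=\lambda_3=0$. You merely spell out the intermediate bookkeeping (the computation of $\nabla_T X$ from \eqref{Darboux} and $\langle T,X\rangle=\lambda_1$) that the paper leaves implicit, and you correctly read the statement's $N,B$ as the Darboux vectors $Q,n$ appropriate to this section.
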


\begin{proof}
	Using  lemma \ref{L2}(ii) and 
	$\forall {\lambda }_{i}=const.,$%
	\begin{equation*}
	\widetilde{\nabla }_{{T}}{X}={\tau_g }\left( {\lambda }_{2}{n}+{
		-\lambda }_{3}{{Q}}\right)
	\end{equation*}%
	is obtained. Using the above equation, the proof can be obtained.
\end{proof}

%\begin{cor}
%	Let ${X}=
%	{\lambda }_{1}{T}+{\lambda }_{2}{N}+
%	{\lambda }_{3}{B}$ be any non-isotropic vector field along the curve $
%	{\alpha }$ in $G^3$ and the parameters ${\lambda}_{i}$ are
%	constants which is $\exists\lambda_{2}, \lambda_3 \neq 0$. Then, the vector field ${X}$ is Fermi--Walker transported along
%	the curve ${\alpha }$	 if and only if the curve ${\alpha}$ is a planar curve.
%\end{cor}

%\begin{ex}
%	Let ${\alpha}:I\subset \mathbb{R}\rightarrow\mathbb{G}^{3}$ be a curve in space $\mathbb{G}^{3}$ an $X=T+N+B$. The derivative Fermi-Walker of $X$ 
%	$$	\widetilde{\nabla }_{{T}}{X}={\tau }\left(B -{N}\right)$$ is obtained. Therefore,   transported along the curve if and only if ${\alpha}(x)$ is planar curve. Therefore $\alpha_{TNB}$ smarandache curve of $\alpha$ is Fermi-Walker transported.	
%\end{ex}

%	Since $w^{\ast }=\widetilde{\tau }\widetilde{T}$ Fermi--Walker termed dual
%	Darboux vector is Fermi-Walker parallel, $\widetilde{\tau }=const.$

\begin{thm}
	\label{T2}Let ${\alpha }$ be a curve in $\mathbb{G}^{3},
	{X}={\lambda }_{2}{Q}+{\lambda }_{3}{n}$ be any non-zero isotropic vector field
	along ${\alpha}.$ The vector field ${X}$ is
	Fermi--Walker transported along the curve ${\alpha }$ if and only if
	\begin{align*}
	{\lambda }_{2}&\kappa_g+\lambda_3\kappa_n = 0 \\
	{\lambda }_{2}& ={c}_{1}\cos\	\Big(\int\limits_{1}^{{x}}{\tau_g }({t})d{t}\Big)+{c}_{2}\sin \Big(\int\limits_{1}^{{x}}{\tau_g }(
	{t})d{t}\Big) \\
	{\lambda }_{3}& ={c}_{2}\cos\
	\Big(\int\limits_{1}^{{x}}{\tau_g}({t})d{t}\Big)-{c}_{1}\sin \Big(\int\limits_{1}^{{x}}{\tau_g }(
	{t})d{t}\Big)
	\end{align*}
	where ${c}_{1},{c}_{2}$ are constants of integration and ${\lambda }_{2}$,${\lambda }_{3}$ are
	continuously differentiable functions of arc length parameter $%
	{x}$.
\end{thm}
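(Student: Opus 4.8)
The plan is to mirror the proof of Theorem~\ref{T2} for the Frenet case, now using Lemma~\ref{L2}(i) for an isotropic vector field expressed in the Darboux frame. First I would write $X = \lambda_2 Q + \lambda_3 n$ and compute $\nabla_T X$ using the Darboux formulae \eqref{Darboux}: since $Q' = -\tau_g n$ and $n' = \tau_g Q$, we get $\nabla_T X = \lambda_2' Q + \lambda_3' n + \lambda_2(-\tau_g n) + \lambda_3(\tau_g Q) = (\lambda_2' + \tau_g \lambda_3)Q + (\lambda_3' - \tau_g \lambda_2)n$. Wait—I should be careful with signs matching the theorem statement; I would recompute so that the $N$- and $B$-components come out as $\lambda_2' - \tau_g\lambda_3$ and $\lambda_3' + \tau_g\lambda_2$ as in the Frenet analogue, adjusting according to the exact Darboux equations given.

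Next I would apply Lemma~\ref{L2}(i), which adds the term $\big(\kappa_g\langle Q,X\rangle + \kappa_n\langle n,X\rangle\big)T = (\lambda_2\kappa_g + \lambda_3\kappa_n)T$ to $\nabla_T X$. Thus
\begin{equation*}
\widetilde{\nabla}_T X = (\lambda_2\kappa_g + \lambda_3\kappa_n)T + \Big(\frac{d\lambda_2}{dx} - \tau_g\lambda_3\Big)Q + \Big(\frac{d\lambda_3}{dx} + \tau_g\lambda_2\Big)n.
\end{equation*}
Since $\{T,Q,n\}$ is a frame, $\widetilde{\nabla}_T X = 0$ is equivalent to the three scalar equations $\lambda_2\kappa_g + \lambda_3\kappa_n = 0$, $\lambda_2' - \tau_g\lambda_3 = 0$, and $\lambda_3' + \tau_g\lambda_2 = 0$. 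This already gives the first displayed condition directly.

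For the remaining two conditions I would solve the linear first-order system formed by the last two equations. Setting $\theta(x) = \int_1^x \tau_g(t)\,dt$, the system is the standard planar-rotation ODE; its general solution is exactly $\lambda_2 = c_1\cos\theta + c_2\sin\theta$ and $\lambda_3 = c_2\cos\theta - c_1\sin\theta$, which one verifies by differentiation (e.g.\ $\lambda_2' = (-c_1\sin\theta + c_2\cos\theta)\tau_g = \tau_g\lambda_3$). Conversely any such pair satisfies the system. I do not expect any real obstacle here: the computation is entirely parallel to Theorem~\ref{T2} (Frenet version) with $\tau$ replaced by $\tau_g$, $N$ by $Q$, $B$ by $n$, and the extra algebraic constraint $\lambda_2\kappa_g + \lambda_3\kappa_n = 0$ replacing $\lambda_2\kappa = 0$. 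The only point requiring minor care is bookkeeping the signs coming from the Darboux equations versus the Frenet–Serret equations so that the stated solution formulas come out verbatim; once that is checked, the "rest is obvious" as in the earlier proofs.
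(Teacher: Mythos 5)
Your proposal is correct and follows exactly the route the paper intends (the paper's own proof just says "by Lemma \ref{L2}(i) the results are obvious," leaving the computation implicit): apply Lemma \ref{L2}(i), expand $\nabla_T X$ via the Darboux equations $Q'=\tau_g n$, $n'=-\tau_g Q$, and solve the resulting rotation ODE system. Your initial sign slip is harmless since you correct it, and with the paper's Darboux matrix the components do come out as $\lambda_2'-\tau_g\lambda_3$ and $\lambda_3'+\tau_g\lambda_2$, matching the stated solution verbatim.
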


\begin{proof}
	By the lemma \ref{L2} i), the results are obvious.
%	\begin{equation*}
%	\widetilde{\nabla}_{{T}}{X}=\Big({\lambda }_{2}{\kappa}\Big){T}+\Big(\frac{d{\lambda }_{2}}{d{x
%	}}-{\tau }{\lambda }_{3}\Big){N}+\Big(\frac{d
%		{\lambda }_{3}}{d{x}}+{\tau }{
%		\lambda }_{2}\Big){B}
%	\end{equation*}
%	is obtained. ${X}$ is Fermi--Walker parallel along the curve iff 
%	\begin{align*}
%	{\lambda }_{2}{\kappa}& =0, \\
%	\frac{d{\lambda }_{2}}{d{x}}-{\tau }{
%		\lambda }_{3}& =0, \\
%	\frac{d{\lambda }_{3}}{d{x}}+{\tau }{\lambda }_{2}& =0.
%	\end{align*}
%	This is equivalent to 
%	\begin{align*}
%	{\lambda }_{2}&\kappa =0, \\
%	{\lambda }_{2}& ={c}_{1}\cos \Big(\int\limits_{1}^{
%		{x}}{\tau }({t})d{t}\Big)+{c}
%	_{2}\sin \Big(\int\limits_{1}^{{x}}{\tau }({t})d{t}\Big), \\
%	{\lambda }_{3}& ={c}_{2}\cos \Big(\int\limits_{1}^{
%		{t}}{\tau }({t})d{t}\Big)-{c}_{1}\sin \Big(\int\limits_{1}^{{x}}{\tau }({t})d{t}\Big).
%	\end{align*}
%	The rest is obvious.
\end{proof}

\begin{cor} 
	Let ${\alpha }$ be a curve in $\mathbb{G}^{3},
	{X}={\lambda }_{2}{N}+{\lambda }_{3}{B}$ be any non-zero isotropic vector field
	along ${\alpha}$ and the parameters ${\lambda}_{i}$ are
	constants. The vector field $X$ along the curve ${\alpha }$ in $\mathbb{G}^3$ is the Fermi-Walker transported iff $$\Big(\frac{\kappa_g}{\kappa_n}\Big)'=0.$$
%	
%	\item[ii)] If ${\alpha }$ is planar curve which is not a line, and ${\lambda}_{2}=0$ then the vector field $X$ is the Fermi-Walker transported.
\end{cor}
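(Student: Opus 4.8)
\emph{Proof proposal.} The plan is to feed the vector field $X=\lambda_2 Q+\lambda_3 n$, with $\lambda_2,\lambda_3$ constant, into Lemma~\ref{L2}(i), read off the scalar equations equivalent to $\widetilde\nabla_T X=0$, and then translate the surviving condition on the geodesic and normal curvatures into the single identity $(\kappa_g/\kappa_n)'=0$.

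First I would compute $\nabla_T X$ from the Darboux formulas \eqref{Darboux}. Since the $\lambda_i$ are constants, $\nabla_T X=\lambda_2 Q'+\lambda_3 n'=\tau_g(\lambda_2 n-\lambda_3 Q)$. As $X,Q,n$ are isotropic and $\{Q,n\}$ is orthonormal, $\langle Q,X\rangle=\lambda_2$ and $\langle n,X\rangle=\lambda_3$, so Lemma~\ref{L2}(i) gives
\begin{equation*}
\widetilde\nabla_T X=\big(\kappa_g\lambda_2+\kappa_n\lambda_3\big)T+\tau_g\big(\lambda_2 n-\lambda_3 Q\big).
\end{equation*}
Because $\{T,Q,n\}$ is a frame, $\widetilde\nabla_T X=0$ is equivalent to the three equations $\kappa_g\lambda_2+\kappa_n\lambda_3=0$, $\tau_g\lambda_2=0$ and $\tau_g\lambda_3=0$. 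Since $X\neq0$ and the $\lambda_i$ are constant, the last two already force $\tau_g\equiv0$ (exactly as in the preceding corollary for the non-isotropic case), so the question reduces to whether the identity $\kappa_g\lambda_2+\kappa_n\lambda_3\equiv0$ can hold with fixed constants $(\lambda_2,\lambda_3)\neq(0,0)$.

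The core step is to recognize this last requirement as $(\kappa_g/\kappa_n)'=0$. For necessity, differentiate $\kappa_g\lambda_2+\kappa_n\lambda_3=0$ to obtain $\kappa_g'\lambda_2+\kappa_n'\lambda_3=0$; a nontrivial constant solution $(\lambda_2,\lambda_3)$ of this homogeneous $2\times2$ system forces the determinant $\kappa_g\kappa_n'-\kappa_g'\kappa_n$ to vanish, and since $\kappa_g\kappa_n'-\kappa_g'\kappa_n=-\kappa_n^2(\kappa_g/\kappa_n)'$ this gives $(\kappa_g/\kappa_n)'=0$ wherever $\kappa_n\neq0$. Conversely, if $(\kappa_g/\kappa_n)'=0$ then $\kappa_g=c\,\kappa_n$ for a constant $c$, and the non-zero isotropic field $X=Q-c\,n$ satisfies $\kappa_g\lambda_2+\kappa_n\lambda_3\equiv0$, which together with $\tau_g\equiv0$ makes $\widetilde\nabla_T X=0$. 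I expect the only friction to be bookkeeping rather than anything deep: one must control the zero sets of $\kappa_n$ (and $\kappa_g$) so that $\kappa_g/\kappa_n$ is interpreted correctly, and one should state explicitly that $\tau_g\equiv0$ belongs to the characterization — by \eqref{kt} the condition $(\kappa_g/\kappa_n)'=0$ is equivalent to $\tau=-\tau_g$, so the cleanest form of the corollary couples the stated curvature identity with $\tau_g\equiv0$.
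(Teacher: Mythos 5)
Your computation is correct and is exactly the argument the paper leaves implicit: this corollary is stated without proof, and the intended route is precisely to substitute $X=\lambda_2 Q+\lambda_3 n$ (the statement's ``$N$'' and ``$B$'' are evidently slips for the Darboux vectors $Q$ and $n$) with constant coefficients into Lemma~\ref{L2}(i), or equivalently into the preceding theorem, and read off the three scalar conditions. Your reduction to $\tau_g\lambda_2=\tau_g\lambda_3=0$ and $\kappa_g\lambda_2+\kappa_n\lambda_3\equiv 0$, and the determinant argument showing that a nontrivial constant solution of the latter exists precisely when $\kappa_g\kappa_n'-\kappa_g'\kappa_n=-\kappa_n^2\big(\kappa_g/\kappa_n\big)'$ vanishes, are both sound. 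You are also right to flag the two points where the printed statement is loose: first, $(\kappa_g/\kappa_n)'=0$ alone is not sufficient, since the $Q$- and $n$-components force $\tau_g\equiv 0$ for a non-zero $X$, and this does not follow from $(\kappa_g/\kappa_n)'=0$ (by \eqref{kt} that condition only yields $\tau=-\tau_g$); second, for a \emph{fixed} pair of constants $(\lambda_2,\lambda_3)$ the relevant condition is $\lambda_2\kappa_g+\lambda_3\kappa_n\equiv 0$, so the stated ``iff'' is only correct when read as an existence statement over non-zero constant pairs $(\lambda_2,\lambda_3)$, with the usual care about zeros of $\kappa_n$. These are defects of the corollary as stated, not of your proof.
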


%\begin{proof}
%	Using lemma \ref{L1} 
%	\begin{equation*}
%	\widetilde{\nabla }_{{T}}{X}=\nabla _{{T}}
%	{X}+{\kappa }\langle N, X\rangle {T}
%	\end{equation*}%
%	for $ {\lambda }_{2}=0$ and ${\lambda }_{3}=const.,$
%	\begin{equation*}
%	\widetilde{\nabla }_{{T}}{X}=-{\tau }{\lambda }_{3}{N}
%	\end{equation*}%
%	is obtained. Since ${\alpha }$ is a planar curve we get 
%	\begin{equation*}
%	\widetilde{\nabla }_{{T}}{X}=0
%	\end{equation*}%
%	which gives the result.
%\end{proof}

\begin{cor}
	Let $\{{T},{Q},{n}\}$ be the Darboux frame
	of the curve ${\alpha }$. $\{{T},{Q},{n}\}$ Darboux frame of the curve is a non-rotating frame if and only if the curve is a line. Otherwise, the Darboux frame is not a non-rotating frame along the curve in Galilean space.
\end{cor}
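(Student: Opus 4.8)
The plan is to evaluate the Fermi--Walker derivative on each vector of the Darboux frame separately, using Lemma \ref{L2}, and then to determine when all three derivatives vanish simultaneously.

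First I would record that $T=\alpha'$ is non-isotropic (its first coordinate equals $1$), whereas $Q$ and $n$ are isotropic (first coordinate $0$), so Lemma \ref{L2}(ii) applies to $T$ and Lemma \ref{L2}(i) applies to $Q$ and to $n$. Using the Darboux equations \eqref{Darboux} together with the orthonormality relations $\langle T,T\rangle=\langle Q,Q\rangle=\langle n,n\rangle=1$ and $\langle Q,n\rangle=0$, I would compute
\[
\widetilde{\nabla}_T T=T'-(\kappa_g Q+\kappa_n n)\langle T,T\rangle=(\kappa_g Q+\kappa_n n)-(\kappa_g Q+\kappa_n n)=0,
\]
\[
\widetilde{\nabla}_T Q=Q'+\kappa_g\langle Q,Q\rangle\,T=\kappa_g T-\tau_g n,\qquad \widetilde{\nabla}_T n=n'+\kappa_n\langle n,n\rangle\,T=\kappa_n T-\tau_g Q.
\]
Thus $T$ is automatically Fermi--Walker transported, and the whole frame is non-rotating precisely when $\kappa_g T-\tau_g n=0$ and $\kappa_n T-\tau_g Q=0$.

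Next, since $\{T,Q,n\}$ is a basis at every point of $\alpha$, those two vector equations are equivalent to the scalar conditions $\kappa_g=\kappa_n=\tau_g=0$. Invoking the first identity in \eqref{kt}, $\kappa^2=\kappa_g^2+\kappa_n^2=0$, so $\kappa\equiv 0$, i.e.\ $\alpha$ is a straight line. For the converse, if $\alpha$ is a line then $T'=\alpha''=0$, and the Darboux equations force $\kappa_g=\kappa_n=0$; together with $\tau_g=0$ (the unit surface normal being constant along the line) the three displayed formulas all vanish, so $\{T,Q,n\}$ is a non-rotating frame. This mirrors the Frenet-frame corollary proved above.

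The step I expect to be the most delicate is the converse direction: the vanishing of $\kappa_g$ and $\kappa_n$ is forced by $\alpha$ being a line, but the vanishing of $\tau_g$ is an additional fact about the frame rather than about $\alpha$ alone. I would therefore either read the equivalence as "$\kappa_g=\kappa_n=\tau_g=0$" (which, via \eqref{kt}, is what the computation really yields) or argue that along a line the natural choice of unit normal is parallel, giving $\tau_g=0$. Everything else is a direct substitution into Lemma \ref{L2}.
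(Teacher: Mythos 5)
Your argument is correct in substance and is precisely the computation the paper leaves implicit: the corollary is stated without proof, and the intended justification is exactly the evaluation of $\widetilde{\nabla}_{T}$ on $T$, $Q$, $n$ via Lemma \ref{L2} and the Darboux equations \eqref{Darboux}, in parallel with the Frenet-frame corollary. Two remarks. First, a sign slip: since $Q'=\tau_g n$, one gets $\widetilde{\nabla}_{T}Q=\kappa_g T+\tau_g n$ rather than $\kappa_g T-\tau_g n$; this does not change the vanishing condition $\kappa_g=\kappa_n=\tau_g=0$. Second, the reservation you raise about the converse is well founded and is a weakness of the statement rather than of your proof: a line lying on a surface can have $\tau_g\neq 0$ (the surface normal may rotate about a ruling), in which case $\widetilde{\nabla}_{T}Q=\tau_g n\neq 0$ and the Darboux frame fails to be non-rotating even though $\alpha$ is a line. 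What the computation genuinely establishes is that the frame is non-rotating exactly when $\kappa_g=\kappa_n=\tau_g=0$, which by \eqref{kt} forces $\kappa=0$ and hence a line; the reverse implication requires the additional hypothesis $\tau_g=0$, exactly as you observe.
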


\section{Conclusions}
The notion of Fermi-Walker derivative, it shows us one method, which is used
for defining \textquotedblright constant\textquotedblright direction, that
may contain lots of condition to have Fermi-Walker transport or non-rotating
frame. The condition of Fermi-Walker transport depends on a solution that
contains differential equation system which is not always easy to find the
answer. Therefore, it is important to analyze this concept. In this paper,
Fermi-Walker derivative, Fermi-Walker transport and non-rotating frame
concepts are defined along any curve and the notions have been analyzed or
both isotropic and non-isotropic vector fields.

We have investigated Fermi-Walker derivative and geometric applications in
various spaces like Euclidean, Lorentz and Dual space up to now \cite%
{7,16,17}. The Fermi-Walker derivative which is defined in $\mathbb{G}^{3}$
is different from them so far since it is examined for both isotropic and
non-isotropic vector fields.

Firstly, Fermi-Walker derivative is redefined for any isotropic vector
fields along a curve which is in Galilean space. We have proved Fermi-Walker
derivative that is defined for the isotropic vector fields coincides with
Fermi derivative which is defined in any surface. We have shown Fermi-Walker
derivative which is defined for any non-isotropic vector fields is not
coincides with derivative of the vector fields. Being Fermi-Walker transport
conditions are examined for any isotropic and non-isotropic vector fields.
We have shown that if the curve is a line or a planar curve which is not a
line then the non-zero isotropic vector field is Fermi-Walker transported.
We have obtained that Frenet frame is not a non-rotating frame if the curve
is not a line. 

Then, similar investigations have been made for any isotropic and
non-isotropic vector fields with respect to the Darboux frame in Galilean
space. We have proved while the curve is a line the Darboux frame is a
non-rotating frame.

\end{document}